\newcommand{\JZR}[1]{{\color{black} #1}} 
\newcommand{\JZ}[1]{{\color{black} #1}} 
\newcommand{\DR}[1]{{\color{black} #1}}
\newtheorem{thm}{Theorem}[section]
\newtheorem{prop}[thm]{Proposition}
\newtheorem{cor}[thm]{Corollary}
\newtheorem{lem}[thm]{Lemma}
\newcommand{\SB}[1]{   {\color{black} #1}} 
\def\UBi{{\rm B1} } 
\def\UB2{{\rm B2} }
\begin{document}

\title{The 2-rainbow domination number of Cartesian product of cycles}{}
 
\author{Simon Brezovnik$^{a,b}$ 
\and 
Darja Rupnik Poklukar$^{a}$
\and
Janez \v{Z}erovnik$^{a,c}$
}

\date{\today}

\maketitle

\begin{center}
$^a$ Faculty of Mechanical Engineering, University of Ljubljana, A\v{s}ker\v{c}eva 6,  Ljubljana 1000, Slovenia (simon.brezovnik@fs.uni-lj.si, darja.rupnik@fs.uni-lj.si, janez.zerovnik@fs.uni-lj.si)\\
$^b$ Institute of Mathematics, Physics and Mechanics, Ljubljana, Slovenia\\
$^c$ Rudolfovo - Science and Technology Center, Novo mesto, Slovenia\\
\end{center}

\noindent
{\bf Keywords:} 2-rainbow domination, domination number, Cartesian product. \\          
\noindent
{\bf AMS subject classification (2020):} 05C69, 05C76  \\ \\  
\textbf{Submitted to Ars Mathematica Contemporanea.}

\begin{abstract} 
A {\em $k$-rainbow dominating function} ($k$RDF) of $G$ is a function that assigns subsets of $ \{1,2,...,k\}$ to the vertices of $G$ such that for vertices $v$ with $f(v)=\emptyset $ we have $\bigcup\nolimits_{u\in N(v)}f(u)=\{1,2,...,k\}$.
 The {\em weight} $w(f)$ of a $k$RDF $f$ is defined as $w(f)=\sum_{v\in V(G)}\left\vert f(v)\right\vert $.
The minimum weight of a $k$RDF of $G$ is called the {\em $k$-rainbow domination number} of $G$, which is denoted by $\gamma_{rk}(G)$.
In this paper, we study the 2-rainbow domination number of the Cartesian product of two cycles.
Exact values are \JZR{given} for a number of infinite families and we prove lower and upper bounds for all other cases. \\
\end{abstract}



\section{Introduction}
The Cartesian product is one of the standard graph products \cite{Imrich}.
For example, meshes, tori, hypercubes and some of their generalizations are Cartesian products.

Graph domination is one of the most popular topics in graph theory \cite{Haynes,Haynes2,Book2020}.
There are many variants motivated by interesting applications.
The $k$-rainbow domination problem was first studied in \cite{Bressar2008} and has attracted a lot of attention.
For example, in \cite{Bressar20072394}, the authors proved that the concept of 2-rainbow domination is equivalent to ordinary domination in the prism $G\Box K_2$ and established the NP-completeness of determining whether a graph has a 2-rainbow dominating function with a certain weight. Furthermore, in \cite{Bresar20151263} the authors characterize the pairs of graphs $G$ and $H$ for which
${\gamma(G \Box H)} = \min\{V (G), V (H)\}$. There are also many papers that observe 2-rainbow domination on generalized Petersen graphs, for example \cite{clanek1, Tong20091932, Xu20092570, Erves2021-2}. In recent years, research on the 2-rainbow domination and its variants has expanded even further. For example, in \cite{Kuzman2020454} the $k$-rainbow domination on regular graphs was investigated. Meybodi et al.~\cite{Meybodi2021277} investigated $k$-rainbow domination in graphs with bounded tree-width. In \cite{Kim20213447} Kim investigated $k$-rainbow domination in middle graphs in the context of operations research. In \cite{Brezovnik2019214} an independent variant of $k$-rainbow domination on the lexicographic products of graphs was investigated. Recently, Kosari and Asgharsharghi \cite{Kosari2023} studied the $l$-distance $k$-rainbow domination numbers of graphs.
 For further references, see \cite{Bressar2020}.


\bigskip

In this paper we study 2-rainbow domination numbers of the Cartesian product of two cycles.
We provide exact values for a number of infinite families and prove lower and upper bounds for all other cases. Our main results are summarized in the following two theorems.

For $n \equiv 0 \pmod 6$ the first theorem gives exact values of $\displaystyle \gamma_{r2}(C_{m} \Box C_n)$
for $m \equiv 0,2 \pmod 3$ and bounds  with   gap  at most $\frac{1}{2}n$ for the case  $m \equiv 1 \pmod 3$.

\begin{thm}\label{Main} \label{AllBounds} 
Let $m \geq 3$ and $n \ge 6$, $n \equiv 0 \pmod 6$.
Then we have 
 \begin{enumerate}
\item[a)] 
      if   $ m \equiv 0 \pmod 3  $      then  $\displaystyle \gamma_{r2}(C_{m} \Box C_n)   =    \frac{m}{3}  n   $ .   
\item[b)] 
      if   $ m \equiv 1 \pmod 3 $    then  
$$\displaystyle    \left(\frac{m-1}{3} + \frac{1}{2}\right)  n  \le  \gamma_{r2}(C_{m} \Box C_n)   \le  \frac{m+2}{3}    n    \,  .$$   
\item[c)] 
      if   $ m \equiv 2 \pmod 3  $   then  $\displaystyle \gamma_{r2}(C_{m} \Box C_n)   =    \frac{m+1}{3}    n   $ .   
\end{enumerate}    
\end{thm}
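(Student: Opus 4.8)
The plan is to attack the three cases by combining a general lower bound argument that works column-by-column with explicit periodic constructions for the upper bounds. Throughout, I think of $C_m \Box C_n$ as $n$ consecutive copies (columns) of $C_m$, indexed cyclically by $\mathbb{Z}_n$, and I write $w_j$ for the weight a 2RDF $f$ places on column $j$, so $w(f)=\sum_{j\in\mathbb{Z}_n} w_j$. The key structural fact I would isolate first is a lower bound on the weight of any two (or three) consecutive columns: if a vertex in column $j$ is assigned $\emptyset$, then its two horizontal neighbours lie in columns $j-1$ and $j+1$, and together with its two vertical neighbours in column $j$ they must cover $\{1,2\}$. This lets one prove a ``sliding window'' inequality of the form $w_{j-1}+w_j+w_{j+1}\ge c_m$ for an appropriate constant depending on $m\bmod 3$; summing over all $j$ and dividing by $3$ yields $w(f)\ge \frac{c_m}{3}n$. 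I expect the right constants to be $c_m = m$ when $m\equiv 0$, giving $\gamma_{r2}\ge \frac{m}{3}n$; $c_m = m+1$ when $m\equiv 2$; and $c_m = m-1+\frac32$ (i.e. a window bound of $3m-3+\frac92$ over three columns, or more naturally a per-column average of $\frac{m-1}{3}+\frac12$) when $m\equiv 1$, which requires a finer case analysis on how many columns can be ``cheap''.

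For the upper bounds I would exhibit explicit 2RDFs that are periodic in the $n$-direction with period $6$ (this is exactly why the hypothesis $n\equiv 0\pmod 6$ enters) and essentially determined by a pattern on $C_m$ lifted by a shift. Concretely, for case (a) with $m\equiv 0\pmod 3$ one places singletons $\{1\}$ and $\{2\}$ on a diagonal sublattice of density $\frac{1}{3}$ so that every empty vertex sees both a $\{1\}$ and a $\{2\}$ among its four neighbours; this gives weight exactly $\frac{m}{3}n$ and matches the lower bound, settling (a). For case (c), $m\equiv 2\pmod 3$, the same diagonal idea almost works but the cycle length $m$ is not divisible by $3$, so one extra unit per column is needed to ``patch'' the wrap-around defect, giving weight $\frac{m+1}{3}n$ and again matching the lower bound. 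For case (b), $m\equiv 1\pmod 3$, I would give a construction of weight $\frac{m+2}{3}n$ (patching costs two extra units per period-block in a way that cannot be amortised as well), and pair it with the lower bound $\left(\frac{m-1}{3}+\frac12\right)n$; the gap $\frac12 n$ is then exactly the slack between these two, and closing it is left open.

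The main obstacle, and where most of the work lies, is the lower bound in case (b): the naive sliding-window count only yields $\frac{m-1}{3}n$, so to reach the extra $\frac12 n$ one must argue that columns cannot all simultaneously be as cheap as the $m\equiv 1$ arithmetic would allow — i.e. a discharging or amortization argument showing that each ``deficient'' column forces a surplus in a neighbouring column, on average at least $\frac12$ per column. I would handle this by classifying columns by their weight and the pattern of nonempty labels they carry, showing that a column of the minimum conceivable weight $\frac{m-1}{3}$ (rounded appropriately) cannot be adjacent to another such column, and then averaging. The secondary obstacle is bookkeeping the wrap-around in the $m$-direction for the constructions in (b) and (c): one must verify that the diagonal pattern, after going once around $C_m$, reconnects consistently modulo the period-$6$ structure in the $n$-direction, which forces the specific residue hypotheses and the specific patch costs.
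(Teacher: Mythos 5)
Your upper-bound plan is essentially the paper's: diagonal density-$\frac13$ patterns (period $6$ in the $n$-direction) for $m\equiv 0\pmod 3$, and patched versions costing $\frac{m+1}{3}$ resp.\ $\frac{m+2}{3}$ per column for $m\equiv 2,1\pmod 3$ (the paper realizes the patches by merging two adjacent rows of the basic pattern). The genuine gap is in your lower bound. The ``key structural fact'' you isolate --- an unweighted window inequality $w_{j-1}+w_j+w_{j+1}\ge c_m$ with $c_m=m$ (resp.\ $m+1$) --- is false. Only the middle column of the window is entirely dominated from within the window, and each unit of colour in the middle column can serve \emph{two} vertices of that column, so the correct demand--supply inequality is $2w_j+w_{j-1}+w_{j+1}\ge 2(m-w_j)$, i.e.\ $4w_j+w_{j-1}+w_{j+1}\ge 2m$: the middle column must carry weight $4$. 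A concrete counterexample to your version: let $4\mid m$, give column $j$ the vertical pattern $\{1\},\emptyset,\{2\},\emptyset,\dots$ (weight $m/2$, self-dominating), leave columns $j\pm1$ empty, and let columns $j\pm2$ carry $\{1,2\}$ everywhere; this extends to a valid 2RDF whose window sum at $j$ is $m/2<m$.

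Summing the correct inequality over all $j$ yields only $w(f)\ge \frac{mn}{3}$, which settles case (a) but not case (c). Hence the discharging step you reserve for case (b) is in fact indispensable for case (c) as well: writing $m=3k+\ell$, one shows (as the paper does) that after every column with $w_j>k+\frac{\ell}{2}$ donates half of its surplus to each deficient neighbour, every column retains at least $k+\frac{\ell}{2}$, using $w_{j-1}+w_{j+1}\ge 2m-4w_j$ separately in the ``two rich neighbours'' and ``one rich neighbour'' subcases. Your auxiliary claim for (b) --- that a column of minimum weight cannot be adjacent to another such column --- is also not the right statement: two adjacent cheap columns are possible, and the inequality merely forces a correspondingly larger surplus in the remaining neighbour, which is exactly why the argument has to be organized as a discharging/averaging rather than a non-adjacency claim.
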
 

The second theorem is a summary of the lower and upper bounds of the products of cycles, covering all cases.
Note that the gap is at most $\frac{1}{2}n + 2 \left\lceil \frac{m}{3}\right\rceil $.

\begin{thm} \label{AllBounds2}
Let $m \geq 3$ and $n \ge 6$.
Then
$$\displaystyle \left(\left\lfloor \frac{m}{3}\right\rfloor + \alpha \right) n \le \gamma_{r2}(C_{m} \Box C_n)
\le
\JZ{\min}
 \left\{ \left\lceil \frac{m}{3}\right\rceil ( n + \beta), \left\lceil \frac{n}{3}\right\rceil (m + \gamma) \right\} \,,
$$
where
$\displaystyle\alpha =
 \left \{ \begin{array}{ll}
 0, & m \equiv 0 \pmod 3\\
 \frac{1}{2} & m \equiv 1 \pmod 3\\
 1, & m \equiv 2 \pmod 3 \\
 \end{array} \right. \,,
$
$\displaystyle\beta =
 \left \{ \begin{array}{ll}
 0, & n \equiv 0 \pmod 6\\
 1, & n \equiv 1,2,3,5 \pmod 6\\
 2, & n \equiv 4 \pmod 6\\
 \end{array} \right. \,,
$
\\
and
$
 \displaystyle\gamma =
 \left \{ \begin{array}{ll}
 0, & n \equiv 0 \pmod 6\\
 1, & n \equiv 5 \pmod 6\\
 2, & n \equiv 1,2,3,4 \pmod 6\\
 \end{array} \right. \,.
$
\end{thm}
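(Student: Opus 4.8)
\noindent\emph{Proof plan.} The statement combines the lower bound $(\lfloor m/3\rfloor+\alpha)n$, which will hold for every $n\ge3$, with two explicit upper‑bound constructions — one ``oriented along $C_n$'', giving $\lceil m/3\rceil(n+\beta)$, and the same scheme with the two factors interchanged, giving $\lceil n/3\rceil(m+\gamma)$ — whose minimum is the asserted right‑hand side. I describe the three in order.

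\emph{Lower bound.} I would count weight column by column. Write $V(C_m\Box C_n)=\mathbb Z_m\times\mathbb Z_n$, fix a minimum $2$RDF $f$, and for $j\in\mathbb Z_n$ let $w_j=\sum_{i\in\mathbb Z_m}|f(i,j)|$ be the weight placed on the $j$‑th copy of $C_m$. Since the $2$‑rainbow condition at a vertex of column $j$ involves only columns $j-1,j,j+1$, the matter reduces to a statement about a single cycle $C_m$ fed by an arbitrary ``external'' colour supply: such a configuration still requires weight at least $\lfloor m/3\rfloor$, and at least $\lfloor m/3\rfloor+\frac{1}{2}$, resp.\ $\lfloor m/3\rfloor+1$, \emph{on average} when $m\equiv1$, resp.\ $m\equiv2\pmod3$. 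Hence the average of the $w_j$ is at least $\lfloor m/3\rfloor+\alpha$, and summing over $j$ gives the bound. For $3\mid m$ this is merely $\gamma_{r2}(G)\ge 2|V(G)|/(\Delta(G)+2)$ with $G=C_m\Box C_n$; the content lies in the refinement for $m\not\equiv0\pmod3$, which uses no divisibility of $n$ and is precisely the lower bound that enters Theorem~\ref{Main}.

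\emph{Upper bounds.} For the first one I would start from a doubly periodic ``diagonal'' $2$RDF valid when $3\mid m$ and $6\mid n$: put $f(i,j)=\{1\}$ if $i\equiv j\pmod3$ and $j$ is even, $f(i,j)=\{2\}$ if $i\equiv j\pmod3$ and $j$ is odd, and $f(i,j)=\emptyset$ otherwise. One checks in one line that this is a $2$RDF — an empty $(i,j)$ has exactly one labeled vertical neighbour and exactly one labeled horizontal neighbour, and since exactly one of $j,j\pm1$ is even their labels are $1$ and $2$ — of weight $\frac{mn}{3}$, matching Theorem~\ref{Main}(a). For general $m,n$ one runs this core on the bulk of the torus and patches the (at most two) seams where $3\nmid m$ or $6\nmid n$ break the periodicity. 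The seam in the $m$‑direction (one or two leftover rows) is repaired by fully labeling one leftover row with colours alternating $1,2$, at extra cost $n$, so that $\lfloor m/3\rfloor n$ becomes $\lceil m/3\rceil n$; for $m\equiv2\pmod3$ the two leftover rows need a slightly more intricate joint pattern of the same total cost. The seam in the $n$‑direction is confined to a bounded number of columns, each costing only $\lceil m/3\rceil$ because the core is that sparse, and a short case analysis in $n\bmod6$ bounds the extra weight there by $\beta\lceil m/3\rceil$. Together these give $\gamma_{r2}(C_m\Box C_n)\le\lceil m/3\rceil(n+\beta)$. Repeating the construction with the roles of the two cycles swapped (period $3$ along $C_n$, period $6$ along $C_m$, using $C_m\Box C_n\cong C_n\Box C_m$) gives $\gamma_{r2}(C_m\Box C_n)\le\lceil n/3\rceil(m+\gamma)$; the minimum of the two is the claimed upper bound, and comparing the first bound with the lower bound yields the quoted gap $\frac{n}{2}+2\lceil m/3\rceil$.

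\emph{Main obstacle.} The delicate points are, on the lower side, making the per‑column argument tight for $m\not\equiv0\pmod3$ — ruling out $2$RDFs that beat $2|V|/(\Delta+2)$ and pinning down the exact value of $\alpha$ — and, on the upper side, the seam book‑keeping: the $m\equiv2\pmod3$ vertical seam and the $n\bmod6$ split for the horizontal seam, where the junction between the periodic core and the corrected rows or columns must be verified in each residue class so that exactly the advertised extra weight $\beta$ (or $\gamma$) is used. Everything else is routine checking of the $2$‑rainbow condition on finitely many residue classes.
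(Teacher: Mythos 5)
Your upper\mbox{-}bound plan is essentially the paper's. The periodic diagonal pattern (colour placed on the cells with $i\equiv j\pmod 3$ and determined by the parity of $j$) is exactly the base assignment used there for $3\mid m$, $6\mid n$; the $m\bmod 3$ seam is likewise repaired at cost $n$ per missing residue (the paper merges two adjacent rows of a $3\lceil m/3\rceil$-row pattern into their set union rather than inserting a fully labelled alternating row, but the weight count is identical), the $n\bmod 6$ seam is handled by a finite case analysis over the residue costing exactly $\beta\lceil m/3\rceil$, and the second bound is the transposed construction. Deferring that residue-by-residue verification is reasonable for a plan.

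The genuine gap is in the lower bound. Your reduction --- ``a single cycle $C_m$ fed by an arbitrary external colour supply still requires weight at least $\lfloor m/3\rfloor+\alpha$ on average'' --- is not actually a statement about a single column: an individual copy of $C_m$ can carry weight $0$, with all of its demand met by the two neighbouring copies, so no per-column bound exists, and the qualifier ``on average'' is precisely the conclusion to be proved. What is missing is the mechanism that amortizes a deficient column against its neighbours. In the paper this is the inequality $w_{j-1}+w_{j+1}\ge 2m-4w_j$ for the column weights $w_j$: the at least $m-w_j$ uncoloured vertices of column $j$ demand $2(m-w_j)$ colours in total, of which at most $2w_j$ can be supplied from inside the column, so the adjacent columns must supply the rest. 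A discharging rule --- each column with $w_j>\lfloor m/3\rfloor+\frac{1}{2}$ is capped at $\lfloor m/3\rfloor+\frac{1}{2}$ and donates half of its excess to each adjacent deficient column --- combined with this inequality and the integrality of the $w_j$ then shows that every adjusted column weight is at least $\lfloor m/3\rfloor+\frac{1}{2}$ when $m\equiv 1\pmod 3$ (and at least $\lfloor m/3\rfloor+1$ when $m\equiv 2\pmod 3$), while the total weight has not increased. Without this three-column counting inequality, or an equivalent amortization, the cases $m\not\equiv 0\pmod 3$ --- which, as you note, are the entire content of the lower bound beyond the regular-graph estimate $\gamma_{r2}(G)\ge |V(G)|/3$ --- are unsupported.
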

%
\smallskip

The upper bounds are given in alternative form as \JZR{Corollary} \ref{CorTable}. The rest of the paper is organized as follows.
In the next section we recall some basic definitions and some useful previously known results.
In Section 3 we prove lower bounds.
In Section 4, we study two patterns that allow constructions that yield upper bounds.
The final section contains a number of ideas for future research.

\section{Preliminaries}
 
A finite, simple and undirected graph $G=(V(G),E(G))$ is given by a set of vertices $V(G)$ and a set of edges $E(G)$. 
 As usual, the edges $\{i,j\} \in E(G)$  are shortly denoted  by $ij$. 
  
 A set $S$ is a dominating set if every vertex in the complement $V(G) \setminus  S$ is adjacent to a vertex in $S$. The minimum cardinality of a dominating set of $G$ is called the  domination number $\gamma (G)$.

The Cartesian product of two graphs, $G\Box H$, is the graph with vertex set $V(G)\times V(H)$, in which two vertices are adjacent if and only if they are equal in one coordinate and adjacent in the other.
The Cartesian product of graphs is one of the standard graph products \cite{Imrich}. The Cartesian product is commutative.
In other words: $C_m\Box C_n$ is isomorphic to $C_n\Box C_m$.
So if we consider the product of the cycles $C_m\Box C_n$, we can assume $m \le n$.

For a given vertex $v\in V(G)$, the open neighborhood $N(v)$ consists of the vertices adjacent to $v$. The degree of vertex $v$ equals $\textnormal{deg}_G(v)=|N(v)|.$ The minimum and the maximum degree of a graph $G$ are denoted by $\delta(G)$ and $\Delta(G).$

Let $f$ be a function that assigns to each vertex a set of colors chosen from the set $\{1,2,\dots,k\}=[k]$, with the property that for each $v\in V(G)$ with $f(v)=\emptyset$
we have
$$\bigcup_{u\in N(v)} f(u)=[k].$$
Such a function $f$ is called a $k$-rainbow dominating function (kRDF) of $G$. The weight of $f$, \JZR{denoted by $w(f)$, is defined as}
$$ w(f)=\sum_{v\in V(G)}|f(v)|.$$
Recall that $f(v)$ is a set of colors and $|f(v)|$ denotes the number of elements in $f(v)$.
The minimum weight of a kRDF on $G$ is called the $k-$rainbow domination number of $G$, $\gamma_{rk}(G)$ \SB{and in this case the function is called\textit{ $\gamma_{rk}(G)$-function}}. It is clear that for $k=1$ this definition corresponds to the usual domination.

The following theorems, which connect rainbow domination with (ordinary) domination, will be of interest here.

\begin{thm}\cite{Bressar2008}
For any graph $G$  we have
$
\gamma_{rk}(G)=\gamma (G\Box K_{k}).
$
\end{thm}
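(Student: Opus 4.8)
The plan is to exhibit a weight-preserving correspondence between $k$-rainbow dominating functions of $G$ and dominating sets of $G\Box K_{k}$, and then read off both inequalities $\gamma_{rk}(G)\le\gamma(G\Box K_{k})$ and $\gamma(G\Box K_{k})\le\gamma_{rk}(G)$ from it. Label the vertices of $K_{k}$ by the colors $[k]=\{1,\dots,k\}$, so that $V(G\Box K_{k})=V(G)\times[k]$. The first thing I would record is the local structure of the product: by definition of the Cartesian product, $(u,i)$ and $(v,j)$ are adjacent precisely when either $u=v$ and $i\ne j$ (because $K_{k}$ is complete), or $uv\in E(G)$ and $i=j$. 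Thus each ``column'' $\{(u,i):i\in[k]\}$ is a clique, and the only other edges join equal colors along edges of $G$. This dichotomy of neighbor types is exactly what will match the two clauses defining a $k$RDF.

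For the direction $\gamma(G\Box K_{k})\le\gamma_{rk}(G)$, I would take a $\gamma_{rk}(G)$-function $f$ and set $D=\{(u,i):i\in f(u)\}$, noting immediately that $|D|=\sum_{u}|f(u)|=w(f)$. To verify $D$ dominates $G\Box K_{k}$, pick any $(v,j)\notin D$, i.e.\ $j\notin f(v)$, and split into two cases. If $f(v)\ne\emptyset$, choose $i\in f(v)$; then $i\ne j$, so $(v,i)\in D$ lies in the same column clique as $(v,j)$ and dominates it. If $f(v)=\emptyset$, the rainbow condition $\bigcup_{u\in N(v)}f(u)=[k]$ supplies a neighbor $u\in N(v)$ with $j\in f(u)$, whence $(u,j)\in D$ is adjacent to $(v,j)$ along the color-$j$ copy of $G$. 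Minimizing over $f$ gives the bound.

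For the reverse direction $\gamma_{rk}(G)\le\gamma(G\Box K_{k})$, I would start from a minimum dominating set $D$ of $G\Box K_{k}$ and define $f(u)=\{i:(u,i)\in D\}$, so again $w(f)=|D|$. The only thing to check is the rainbow condition at a vertex $v$ with $f(v)=\emptyset$; this means no column vertex $(v,i)$ belongs to $D$. Fix any color $j\in[k]$. Since $D$ dominates $(v,j)$ and none of its column neighbors $(v,i)$ ($i\ne j$) can be in $D$, the dominating neighbor must be of the second type, namely $(u,j)\in D$ for some $u\in N(v)$; hence $j\in f(u)$. As $j$ was arbitrary, $\bigcup_{u\in N(v)}f(u)=[k]$, so $f$ is a $k$RDF, and minimizing over $D$ closes the argument.

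Combining the two inequalities yields $\gamma_{rk}(G)=\gamma(G\Box K_{k})$. I do not expect a genuine obstacle here; the one point demanding care is the case analysis on the neighbor type in the product, where I must be sure that when $f(v)=\emptyset$ the column neighbors contribute nothing and therefore every color must be covered along the $G$-edges — this is precisely the bridge turning the single domination requirement into the ``all colors present'' requirement of rainbow domination.
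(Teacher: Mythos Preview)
Your argument is correct and is in fact the standard proof of this result: the map $f\mapsto D=\{(u,i):i\in f(u)\}$ is a weight-preserving bijection between $k$RDF's of $G$ and subsets of $V(G\Box K_k)$, and your two case analyses verify that it carries $k$RDF's exactly to dominating sets. Note, however, that the paper does not supply its own proof of this theorem; it is quoted from \cite{Bressar2008} as background, so there is no in-paper proof to compare against.
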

\begin{thm}\cite{BertL2004}\label{T:I2}
For any graph $G$  we have
$
\gamma_{rk}(G) \leq k\gamma (G).
$
\end{thm}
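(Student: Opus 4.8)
The plan is to exhibit an explicit $k$-rainbow dominating function whose weight equals $k\,\gamma(G)$, and thus bound the minimum weight from above. First I would fix a minimum dominating set $S$ of $G$, so that $|S| = \gamma(G)$. Then I would define $f \colon V(G) \to 2^{[k]}$ by setting $f(v) = [k] = \{1,2,\dots,k\}$ for every $v \in S$ and $f(v) = \emptyset$ for every $v \in V(G)\setminus S$.

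Next I would verify that this $f$ is indeed a $k$RDF. The only vertices with $f(v) = \emptyset$ are those lying in $V(G)\setminus S$; since $S$ dominates $G$, any such $v$ has at least one neighbour $u \in S$, and by construction $f(u) = [k]$. Hence $\bigcup_{u\in N(v)} f(u) \supseteq f(u) = [k]$, so the rainbow condition is satisfied at every empty vertex, and $f$ is a valid $k$RDF.

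Finally I would compute the weight: $w(f) = \sum_{v\in V(G)} |f(v)| = \sum_{v \in S} k = k\,|S| = k\,\gamma(G)$. Since $\gamma_{rk}(G)$ is by definition the minimum weight over all $k$RDFs of $G$, it follows that $\gamma_{rk}(G) \le w(f) = k\,\gamma(G)$, as claimed.

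There is essentially no obstacle here; the proof is a one-line construction, and the only thing to check carefully is the rainbow condition, which is immediate from the domination property of $S$. As an alternative one could derive the bound from the preceding identity $\gamma_{rk}(G) = \gamma(G \Box K_{k})$, noting that if $S$ is a minimum dominating set of $G$ then $S \times V(K_{k})$ is a dominating set of $G \Box K_{k}$ of cardinality $k\,\gamma(G)$; but the direct construction above is shorter and self-contained.
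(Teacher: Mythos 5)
Your construction is correct and complete: assigning $[k]$ to every vertex of a minimum dominating set and $\emptyset$ elsewhere yields a valid $k$RDF of weight $k\gamma(G)$, which is exactly the standard argument behind this bound (the paper itself only cites the result from Hartnell and Rall without reproving it). Nothing is missing.
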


In \cite{Klavzzar1995129}, it was shown that  $\gamma(C_3 \Box C_n)=n- \lfloor \frac{n}{4}\rfloor$, $\gamma(C_4 \Box C_n)=n$ for $n \geq 4$ and 
$$\gamma(C_5 \Box C_n)=\left \{ \begin{array}{ll}
              n, &   n \equiv 0 \pmod 5\\
              n+1, &  n  \equiv 1,2,4 \pmod 5 \\
             \end{array} \right. 
             \,.$$ 
This result was supplemented  in \cite{Mohamed2}, where it was shown that 
$\gamma(C_5 \Box C_n)=n+2$ for $n\equiv 3 \pmod 5$ 
and also exact values for $\gamma(C_6 \Box C_n)$ and $\gamma(C_7 \Box C_n)$ were given.
 In \cite{Mohamed} it was proved that
$\lceil \frac{9n}{5} \rceil \leq \gamma(C_8 \Box C_n) \leq \lceil \frac{9n}{5} \rceil +1$ for $n \geq 8$ and exact value for $\gamma(C_9 \Box C_n)$ was given.

Considering 2-rainbow domination number of the Cartesian product of two cycles, the well-known inequality is (see \cite{Stepien2014748})
$$\frac{mn}{3}\le \gamma_{r2}(C_m\Box C_n)\le 2 \gamma (C_m\Box C_n).
$$
The 2-rainbow domination number of the products $C_3 \Box C_n$ and $C_5 \Box C_n$ were studied in \cite{Stepien2014113,Stepien2014748}.
In \cite{Stepien2015668} a complete characterization of graphs $C_m \Box C_n$ was given, for which the 2-rainbow domination number is equal to $\frac{mn}{3}$.
A summary of the then known results on the $k$-rainbow domination of the Cartesian product of cycles appears in \cite{Gao2019}.
In the following 
we recall the previously known formulas for 2-rainbow domination numbers for $C_m\Box C_n$.
 
\medskip
 \noindent
\begin{tabular}{ll}
 \hline \\
 Result &   Ref. \\
  \hline  
$\gamma_{r2}(C_{3} \Box C_n)    =
        \left \{ \begin{array}{ll}
              n, &   n \equiv 0 \pmod 6\\
              n+1, &  n  \equiv 1,2,3,5 \pmod 6 \\
              n+2, &  n  \equiv 4 \pmod 6 \\
             \end{array} \right.$ &   \cite{Stepien2014748} \\
  \hline \\
 $\gamma_{r2}(C_{4} \Box C_n)   =
        \left \{ \begin{array}{ll}
              \left\lfloor\frac{3n}{2} \right\rfloor, &  n  \equiv 0  \pmod 8 \\[1mm]
              \left\lfloor\frac{3n}{2} \right\rfloor+1, &  n  \equiv 2,4,5  \pmod 8 \\[1mm]
              \left\lfloor\frac{3n}{2}\right\rfloor+2 , &  n  \equiv 1,3,6,7  \pmod 8\\[1mm]
             \end{array} \right.
$ &  \cite{NatAcademy}  \\
   \hline \\
 $\gamma_{r2}(C_5 \Box C_n)=2n$ &   \cite{Stepien2014113} \\
   \hline \\ 
 $\gamma_{r2}(C_8 \Box C_n)=3n$ &   \cite{NatAcademy}   \\
   \hline \\
$\displaystyle \gamma_{r2}(C_m \Box C_n)= \frac{mn}{3}$, if and only if either \\
 $m \equiv 0 \pmod 3, n \equiv 0 \pmod 6$ or
$m \equiv 0 \pmod 6, n \equiv 0 \pmod 3$ &\cite{Stepien2015668}\\ 
   \hline 
\end{tabular}

\section{Lower bounds for 2-rainbow domination of $C_m \Box C_n$}

For simplicity, we introduce some more notations.
The vertices of $V(C_m\Box C_n)$ are denoted by $(i,j)$ for $i\in [m]$ and $j\in [n]$.
The coordinates $i$ and $j$ are taken modulo $m$ and $n$ respectively, so that we identify $m$ and $0$, for example.
For a fixed (small) $m$, the set of vertices is
$\mathcal{C}^i=\{(i,1),(i,2),\cdots,(i,n)\}$, $i\in [m]$ is called the $i$-th column of $C_{m}\Box C_{n}$.

Let $f$ be a 2RDF of $C_m \Box C_n$ and $s_i = \sum_{x \in \mathcal{C}^i} |f(x)|$.
The sequence
$(s_{1}$, $s_{2}$, $\cdots$, $s_{m})$,
is called the 2RDF sequence that corresponds to $f$.
We also use $f(i,j)=f(v)$ to denote the value of $f$ at vertex $v= (i,j)$ for $i \in [m]$ and $j \in [n]$.


First, we recall a general bound for regular graphs. We believe that it is well known, although we have not found a reference with a proof.
Therefore, for the sake of completeness, we provide a short proof.

\begin{lem}\label{GeneralLB}
Let $G$ be an $r$-regular graph. Then
$\displaystyle \gamma_{rk}(G) \ge \frac{k}{r+k} |V(G)| \,.
$
\end{lem}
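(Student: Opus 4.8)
The plan is to run a double-counting argument on a fixed minimum-weight $k$RDF. Let $f$ be a $k$RDF of $G$ with $w(f)=\gamma_{rk}(G)$, and partition $V(G)$ into $V_\emptyset=\{v : f(v)=\emptyset\}$ and its complement $V_+=\{v:f(v)\neq\emptyset\}$. First I would record the trivial bound coming from $V_+$: every vertex in $V_+$ contributes at least $1$ to the weight, so $|V_+|\le w(f)$, and hence $|V_\emptyset|\ge |V(G)|-w(f)$.

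Next I would exploit the rainbow condition on $V_\emptyset$. For each $v\in V_\emptyset$ the union $\bigcup_{u\in N(v)}f(u)$ equals $[k]$, so in particular $\sum_{u\in N(v)}|f(u)|\ge k$. Summing this inequality over all $v\in V_\emptyset$ gives $\sum_{v\in V_\emptyset}\sum_{u\in N(v)}|f(u)|\ge k\,|V_\emptyset|$. Now I would swap the order of summation: the left-hand side equals $\sum_{u\in V(G)}|f(u)|\cdot|N(u)\cap V_\emptyset|$, and since $G$ is $r$-regular we have $|N(u)\cap V_\emptyset|\le \deg(u)=r$ for every $u$. Therefore the left-hand side is at most $r\sum_{u\in V(G)}|f(u)|=r\,w(f)$, which yields $r\,w(f)\ge k\,|V_\emptyset|$.

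Finally I would combine the two estimates: $r\,w(f)\ge k\,|V_\emptyset|\ge k\big(|V(G)|-w(f)\big)$, which rearranges to $(r+k)\,w(f)\ge k\,|V(G)|$, i.e.\ $\gamma_{rk}(G)=w(f)\ge \frac{k}{r+k}|V(G)|$, as claimed.

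There is no serious obstacle here; the argument is a clean discharging/double-counting estimate. The only points requiring a little care are making sure the order-of-summation swap is applied to $\sum_{v\in V_\emptyset}|f(u)|$ ranging over neighbors (so that each $u$ is counted once per neighbor lying in $V_\emptyset$) and that regularity is what caps $|N(u)\cap V_\emptyset|$ by $r$ uniformly; everything else is bookkeeping.
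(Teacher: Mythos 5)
Your argument is correct and is exactly the paper's (very terse) proof written out in full: your $|V_\emptyset|=|V(G)|-|V_+|$ is the paper's $|V(G)|-n^*$, your double count $r\,w(f)\ge k\,|V_\emptyset|$ is the paper's $r\,w(f)\ge(|V(G)|-n^*)k$, and your bound $|V_+|\le w(f)$ is the paper's $n^*\le w(f)$. No differences worth noting.
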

\begin{proof} \JZR{
Assume that $f$ is a $k$RDF and that $n^*$ vertices are colored.
Then double count to obtain $r w(f) \ge (|V(G)| - n^* )k$.
Apply $n^* \le w(f) $ and the conclusion follows.}
\end{proof}
%

Cartesian products of cycles are 4-regular graphs, and we consider  2-rainbow domination, so we need a special case of Lemma \ref{GeneralLB}, namely $k=2$ and $r=4$.
\begin{cor}\label{GeneralLB2}  
Let $G$ be a 4-regular graph. Then  
$\displaystyle \gamma_{r2}(G)  \ge \frac{1}{3}  |V(G)| \,.
$    
\end{cor}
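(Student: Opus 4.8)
The plan is to derive this as an immediate specialization of Lemma~\ref{GeneralLB}. Since $C_m \Box C_n$ and, more generally, any graph in question here is $4$-regular, I would simply set $r = 4$ and $k = 2$ in the bound $\gamma_{rk}(G) \ge \frac{k}{r+k}|V(G)|$. This gives $\gamma_{r2}(G) \ge \frac{2}{4+2}|V(G)| = \frac{1}{3}|V(G)|$, which is exactly the claimed inequality.

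For completeness, I would also recall the one-line argument behind Lemma~\ref{GeneralLB} in this special case, so the corollary is self-contained: let $f$ be any $2$RDF of $G$ and let $n^*$ denote the number of vertices $v$ with $f(v) \ne \emptyset$. Each of the $|V(G)| - n^*$ uncolored vertices needs both colors $1$ and $2$ to appear in its neighborhood, contributing at least $2$ to the sum $\sum_{v} \deg(v)\,|f(v)|$ when we count incidences. Since $G$ is $4$-regular, this sum equals $4\,w(f)$, so $4\,w(f) \ge 2(|V(G)| - n^*)$. Combining with the trivial inequality $n^* \le w(f)$ (each colored vertex contributes at least $1$ to the weight) yields $4\,w(f) \ge 2|V(G)| - 2w(f)$, hence $w(f) \ge \frac{1}{3}|V(G)|$. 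Taking the minimum over all $2$RDFs gives the result.

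There is no real obstacle here: the statement is a routine corollary, and the only thing to be careful about is correctly plugging in $r = 4$, $k = 2$ and simplifying $\frac{2}{6} = \frac{1}{3}$. I would present the proof in essentially a single sentence, referencing Lemma~\ref{GeneralLB}.

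\begin{proof}
Apply Lemma~\ref{GeneralLB} with $r=4$ and $k=2$: then $\frac{k}{r+k}=\frac{2}{6}=\frac{1}{3}$, so $\gamma_{r2}(G)\ge \frac{1}{3}|V(G)|$.
\end{proof}
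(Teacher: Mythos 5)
Your proposal is correct and matches the paper exactly: the corollary is obtained by substituting $r=4$ and $k=2$ into Lemma~\ref{GeneralLB}, and your unwinding of the double-counting argument is the same one the paper uses to prove that lemma. Nothing further is needed.
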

\JZR{Note that the statement also follows from \cite[Lemma 2.2, Case (6)]{Kuzman2020454}.}

%
The next lemma will be useful to obtain better lower bounds for Cartesian products of cycles.
In particular, for bounds of $\gamma_{r2}\left( C_{m}\Box C_n\right)$.
Recall that $s_i = \sum_{x \in \mathcal{C}^i} |f(x)|$.

 \begin{lem} \label{L}
Let $f$ be a $\gamma_{r2}\left( C_{m}\Box C_n\right)$-function.
Write $m = 3k + \ell$, where $\ell \equiv m \pmod 3 $.
%
Then
\begin{enumerate}
\item[a)]
$s_{i-1}+s_{i+1} \geq 2m - 4s_{i} = 6k + 2\ell - 4s_{i} \,,
$
\item[b)] \JZR{if} 
$k \ge s_{min} =\min \{ s_{i-1}, s_{i+1}\}$,
 then
$$ s_{max} \geq 2m - 4s_{i} -s_{min} \ge 5k + 2\ell - 4s_{i} \, ,
$$
where $s_{max} = \max \{ s_{i-1}, s_{i+1}\}$.
\end{enumerate}
\end{lem}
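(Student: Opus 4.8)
The plan is to prove part a) by a double counting argument confined to a single column, from which part b) follows by a one-line rearrangement. For part a), I would fix an index $i$ and write $c_j = |f(i,j)|$ for $j$ ranging over the vertices of the column $\mathcal{C}^i$, so that $\sum_j c_j = s_i$ (the second coordinate is read cyclically). Put $Z = \{\, j : c_j = 0 \,\}$ and $z = |Z|$. For each $j \in Z$ the vertex $(i,j)$ is assigned the empty set, so the rainbow condition at $(i,j)$ forces
$$\{1,2\} \setminus \bigl( f(i,j-1) \cup f(i,j+1) \bigr) \subseteq f(i-1,j) \cup f(i+1,j) ,$$
and hence $|f(i-1,j)| + |f(i+1,j)| \ge |f(i-1,j) \cup f(i+1,j)| \ge 2 - |f(i,j-1)| - |f(i,j+1)| = 2 - c_{j-1} - c_{j+1}$. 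Summing over $j \in Z$ and discarding the remaining non-negative terms of $s_{i-1} + s_{i+1} = \sum_j \bigl( |f(i-1,j)| + |f(i+1,j)| \bigr)$ gives
$$s_{i-1} + s_{i+1} \ge \sum_{j \in Z} \bigl( 2 - c_{j-1} - c_{j+1} \bigr) = 2z - \sum_{j \in Z} (c_{j-1} + c_{j+1}) .$$

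It then remains to bound the right-hand side from below. Each value $c_t$ is counted in $\sum_{j \in Z}(c_{j-1} + c_{j+1})$ at most twice --- once as $c_{j-1}$ for $j = t+1$ and once as $c_{j+1}$ for $j = t-1$ --- so $\sum_{j \in Z}(c_{j-1}+c_{j+1}) \le 2 \sum_t c_t = 2 s_i$. Moreover the vertices of $\mathcal{C}^i$ that carry $c_j \ge 1$ number at most $\sum_j c_j = s_i$, so $z \ge m - s_i$. Substituting both estimates yields $s_{i-1} + s_{i+1} \ge 2(m - s_i) - 2 s_i = 2m - 4 s_i$, which equals $6k + 2\ell - 4 s_i$ since $m = 3k + \ell$; this proves a).

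Part b) is then immediate: writing $s_{max} = (s_{i-1} + s_{i+1}) - s_{min}$ and invoking a) gives $s_{max} \ge 2m - 4 s_i - s_{min} = 6k + 2\ell - 4 s_i - s_{min}$, and if moreover $k \ge s_{min}$ then replacing $-s_{min}$ by the smaller quantity $-k$ yields $s_{max} \ge 5k + 2\ell - 4 s_i$. The only delicate point in the whole argument is the counting in a): one must pass correctly from the set containment to the cardinality inequality (via $|A| + |B| \ge |A \cup B|$), handle the cyclic indices carefully when re-summing, and check that the terms dropped when enlarging $\sum_{j \in Z}$ to the full sum $s_{i-1}+s_{i+1}$ are genuinely non-negative. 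I do not expect any deeper obstacle --- part a) is essentially a column-local refinement of the double counting behind Corollary \ref{GeneralLB2}, and part b) contributes nothing beyond the arithmetic above.
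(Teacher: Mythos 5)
Your proof is correct and follows essentially the same route as the paper: part a) is the paper's demand/double-counting argument within a column (each uncolored vertex demands $2$ colors, colored vertices of $\mathcal{C}^i$ can supply at most $2s_i$ of that demand, the rest must come from columns $i-1$ and $i+1$), merely written out with explicit summations, and part b) is the same one-line rearrangement used in the paper.
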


\begin{proof}
 Note that at most $s_{i}$ vertices of the column $\mathcal{C}^i $ are colored
(this holds in the case when all $|f(v)| =1$).
Other ( uncolored ) vertices in $\mathcal{C}^i $, at least $m- s_{i}$ of them,
have a total demand at least $2(m-s_{i})$.
Since at most $2s_{i}$ of this demand can be fulfilled by the colored vertices of $\mathcal{C}^i $,
we must have at least $2m- 4 s_{i}$ colors in the neighborhood of $\mathcal{C}^i $.
Equivalent to this is $s_{i-1}+s_{i+1} \geq 2m - 4s_{i}$.
So if we use $m = 3k + \ell$, we have
$$s_{i-1}+s_{i+1} \geq 2m - 4s_{i} = 6k + 2\ell - 4s_{i} \, ,
$$
as required. Finally, if
 $ k \ge s_{min} =\min \{ s_{i-1}, s_{i+1}\}$,
then
$$ s_{max} \geq 2m - 4s_{i} - s_{min} = 6k + 2\ell - 4s_{i} - s_{min} \ge 5k + 2\ell - 4s_{i} \, ,
$$
and the proof is complete. \end{proof}

The next observation provides lower bounds.
The proof is based on the discharging argument and follows the ideas of \cite{NatAcademy} and \cite{IEEE}.

\begin{prop}\label{c4cnLowerNew}
Let $m \geq 3$  and $n \ge 3$.
Write $m = 3k + \ell$, where $\ell \equiv m \pmod 3 $.
Then  $$\gamma_{r2}(C_{m} \Box C_n)  \ge  kn +  \ell  \frac{n}{2}=\frac{mn}{3}+\ell \frac{n}{6} \,.$$
\end{prop}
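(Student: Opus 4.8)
The plan is to use a discharging argument on the columns, exactly as announced, treating the $2$RDF sequence $(s_1,\dots,s_m)$ as a cyclic sequence to which we assign charge and then redistribute. Initially each column $\mathcal{C}^i$ carries charge $s_i$, so the total charge is $w(f)=\gamma_{r2}(C_m\Box C_n)$. I want to show that after redistribution every column can be made to carry charge at least $\frac{n}{3}$ on average over a block of three consecutive columns, or more precisely that the total charge is at least $kn+\ell\frac{n}{2}$; the point of the discharging is that columns with small $s_i$ (the ``deficient'' ones) must have heavy neighbors, by Lemma~\ref{L}, so they can receive charge from those neighbors without pushing the neighbors below their own target.

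First I would fix the target value $t=\frac{n}{3}$ per column for a ``generic'' column and note that a column with $s_i\ge t$ is already satisfied. The interesting columns are those with $s_i<\frac{n}{3}$ (this is only possible when $m\not\equiv 0\pmod 3$ forces a fractional average, or locally anyway); for such a column, part~a) of Lemma~\ref{L} gives $s_{i-1}+s_{i+1}\ge 2m-4s_i$, i.e. the two neighbors together carry a large charge, and part~b) gives that the larger neighbor alone carries at least $5k+2\ell-4s_i$ provided the smaller neighbor does not already exceed $k$. The discharging rule I would use: each column with $s_i<\frac{n}{3}$ pulls the deficit $\frac{n}{3}-s_i$ from its neighbor(s), splitting the request between the two sides and always drawing from the heavier side first. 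Then I must verify the two things every discharging proof needs: (i) every column ends with charge $\ge \frac{n}{3}$ when $\ell=0$, and the appropriate adjusted bound $\frac{n}{3}+$ (surplus from the $\ell$ extra rows) when $\ell\in\{1,2\}$; and (ii) no column is over-drained, i.e. a heavy column that has sent charge to one or both neighbors still retains at least its target. Step (ii) is where Lemma~\ref{L}b) does the work: if column $i$ is deficient with smaller neighbor at charge $s_{min}\le k$, the heavy neighbor has $s_{max}\ge 5k+2\ell-4s_i$, which is comfortably above $\frac{n}{3}+(\frac{n}{3}-s_i)$ in the relevant range; and if $s_{min}>k$ then both neighbors are already heavy and the bookkeeping is easy. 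One also has to handle two deficient columns adjacent to a common neighbor, where that shared neighbor might be asked to give on both sides — here part~a) applied at the middle column controls the total outgoing amount.

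The cleanest way to package the $\ell$ correction is probably to observe that $kn+\ell\frac n2 = \frac{mn}{3}+\ell\frac n6$, so the claim is that the average column charge is at least $\frac{m}{3}+\frac{\ell}{6}$ times $\frac{n}{m}\cdot m$... more usefully: summing the per-column target, $m\cdot\frac n3$ accounts for the $\frac{mn}{3}$ term, and I need an extra $\ell\frac n6$ spread over the $m$ columns, i.e. an extra $\frac{\ell n}{6m}$ per column on average, which for $\ell=1,2$ should fall out of the slack in Lemma~\ref{L} once one looks at a window of $2k+\ell$ consecutive columns rather than a single one. Concretely, I would sum the inequality $s_{i-1}+s_i+s_{i+1}\ge \frac{2m+s_i}{?}$-type consequence of part a) over a well-chosen set of indices (every third column, say) so that each $s_j$ is counted a controlled number of times, obtaining $\sum_i s_i \ge$ the desired bound directly, bypassing an explicit rule-by-rule discharging verification; this is the ``averaging'' incarnation of the same argument and is less error-prone.

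The main obstacle I expect is the boundary bookkeeping when many consecutive columns are light (an all-light stretch cannot happen because of part a), but a stretch like heavy–light–heavy–light–heavy can, and one must make sure the shared heavy columns are not double-counted in a way that breaks the bound), together with getting the $\ell=1$ case tight — there the $\frac n2$ per unit of $\ell$ is exactly the extremal behavior of a half-colored column, so the inequality has no slack to spare and the index set over which one sums has to be chosen so that the $s_i$ with $\ell$ extra rows are weighted by exactly $\frac12$ on aggregate. I would therefore do $\ell=0$ first (straightforward: the bound is just Corollary~\ref{GeneralLB2} restated, $\gamma_{r2}\ge\frac{mn}{3}=kn$), then $\ell=2$, and finally $\ell=1$, using in each case part~a) summed over every third column index with a small correction at the wrap-around, and invoking part~b) only if a purely additive count leaves a gap.
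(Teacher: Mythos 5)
Your overall strategy --- discharging on column sums via Lemma~\ref{L}, with light columns drawing charge from necessarily heavy neighbours --- is the same as the paper's, but two concrete choices in your plan would sink it. First, the orientation and the target are wrong. The bound $kn+\ell\frac n2$ comes from summing over the $n$ columns of length $m$ (the setting in which Lemma~\ref{L} reads $s_{i-1}+s_{i+1}\ge 2m-4s_i$), with per-column target $k+\frac{\ell}{2}$. You instead take $m$ columns with target $\frac n3$ and hope to recover the missing $\ell\frac n6$ as an average surplus of $\frac{\ell n}{6m}$ per column, for which you give no mechanism. The essential ingredient you never invoke is integrality of the column sums: for $\ell=1$ the target is $k+\frac12$, so a column failing it satisfies $s_i\le k$, i.e.\ it has a deficit of at least $\frac12+(k-s_i)$, and Lemma~\ref{L} shows its neighbours carry enough excess over $k+\frac12$ to cover exactly this. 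In your orientation the target $\frac n3$ is an integer whenever $3\mid n$, so integrality buys nothing and the extra $\ell\frac n6$ has no source.

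Second, your fallback ``averaging incarnation'' --- summing Lemma~\ref{L}(a) over every third column --- provably cannot work. Part (a) is the family of linear inequalities $s_{i-1}+4s_i+s_{i+1}\ge 2m$, every one of which is satisfied with equality by the constant fractional assignment $s_i=\frac m3$; part (b) is vacuous there since its hypothesis $s_{\min}\le k$ fails. Hence no combination of these inequalities, without integrality, can certify anything beyond $\sum_i s_i\ge\frac{mn}{3}$. The improvement to $kn+\ell\frac n2$ genuinely requires the pointwise case analysis with integer $s_i$, which your plan defers or replaces. The working rule, as in the paper, is: every column with $s_i>k+\frac12$ keeps exactly $k+\frac12$ and sends half its excess to each light neighbour; a receiver with $s_i\le k$ then ends at $\ge k+\frac12$ by Lemma~\ref{L}(a) if both neighbours are heavy and by Lemma~\ref{L}(b) if only one is, and no giver is ever over-drained because it parts only with its excess. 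Your worry about a shared heavy neighbour being asked to give on both sides is thus resolved by having the heavy columns push a fixed amount rather than having the light columns pull their deficits.
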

\begin{proof}
\SB{Note that when $m = 3k$, the proof follows directly from Lemma \ref{GeneralLB} In the following, we write the proof for the case when $m = 3k + 1$, since the proof for the case when $m = 3k + 2$ is similar and can therefore be omitted.

 \SB{Let $f$ be a $\gamma_{r2}$-function on the vertex set of $C_{m} \Box C_n$ and let $(s_1, s_2, \ldots , s_m)$,
be the 2RDF sequence corresponding to $f$. } We define a discharging rule in which
 the columns with sufficiently large $s_{i}$ give half of their overweight to \SB{one or both of the neighboring columns}. For this purpose, let $f'$ be a function on the vertex set of $C_{m} \Box C_n$ that assigns a positive real number to each vertex. Denote by $s_i' = \sum_{x \in \mathcal{C}^i} f'(x)$ and let $(s_1', s_2', \ldots , s_m')$ 
be the sequence corresponding to $f'$. Moreover, we define $f'$ such that the following holds:}

If 
  $s_{i} > k + \frac{1}{2} $  then set  $s'_i =  k + \frac{1}{2}$. \SB{If $s_i \le k + \frac{1}{2}$, then
\begin{itemize}
    \item if $s_{i-1}>k + \frac{1}{2}$ and $s_{i+1}>k + \frac{1}{2}$, then $s'_i = s_{i}  +  \frac{1}{2} (s_{i-1} -(k+\frac{1}{2}))+  \frac{1}{2} (s_{i+1} -(k+\frac{1}{2})) $,
    \item if $s_{i-1}>k + \frac{1}{2}$ and $s_{i+1}<k + \frac{1}{2}$, then $s'_i = s_{i}  +  \frac{1}{2} (s_{i-1} -(k+\frac{1}{2}))$,
     \item if $s_{i-1}<k + \frac{1}{2}$ and $s_{i+1}>k + \frac{1}{2}$, then $s'_i = s_{i}  +  \frac{1}{2} (s_{i+1} -(k+\frac{1}{2}))$.
\end{itemize}}

We claim that  $s'_i \ge k + \frac{1}{2}$ for all $i$.
Assume $s_{i} \le k + \frac{1}{2}$. 
Note that, since $s_i$ is an integer, $s_{i} \le k + \frac{1}{2}$ implies  $s_{i} \le k  $.  
Again,   if   $s_{i-1} >k$ and $s_{i+1} >k$ then,   by Lemma \ref{L},  
\begin{eqnarray} 
s'_i &\ = & s_{i} + \frac{1}{2}(s_{i-1} -(k + \frac{1}{2})) + \frac{1}{2}(s_{i+1} -(k + \frac{1}{2}))    \nonumber \\
  &=& s_{i} + \frac{1}{2}(s_{i-1}   +  s_{i+1}) -  (k + \frac{1}{2})  \nonumber \\
   &\ge&   s_{i} + 3k +1  -2s_{i} -k  - \frac{1}{2}   = 2k  +\frac{1}{2} - s_{i}  \nonumber \\
     &=&   k  +\frac{1}{2} +(k - s_{i})    \ge     k  +\frac{1}{2}   \,.  \nonumber 
\end{eqnarray}
 or,  when  $s_{min} =\min \{ s_{i-1}, s_{i+1}\} \le k$,   
\begin{eqnarray} 
s'_i &=& s_{i} + \frac{1}{2}(s_{max} -(k + \frac{1}{2}))    \nonumber \\ 
   &\ge&   s_{i} +  2k +1  -2s_{i}    - \frac{1}{4}   = 2k  +\frac{3}{4} - s_{i}       \,.  \nonumber 
\end{eqnarray} 
Recall that $s_{i}$ is an integer,  so $s_{i} \le  k + \frac{1}{2} $ is equivalent to $s_{i} \le  k   $, and hence 
$$s'_i = 2k  + \frac{3}{4} - s_{i}   =    k  +\frac{3}{4}   + (k- s_{i})   >   k + \frac{1}{2},
$$ 
which   implies  $\gamma_{r2}(C_{m} \Box C_n)  = \sum_i s_{i} \ge \sum_i  s'_i \ge n(k + \frac{1}{2}) $.\\

\noindent Summarizing, we get   
\begin{enumerate}
\item[a)] 
      $\gamma_{r2}(C_{m} \Box C_n)  \ge  kn $ when $\ell = 0 $, 
\item[b)]   
   $\gamma_{r2}(C_{m} \Box C_n)  \ge  kn + \frac{ n }{2} $  when   $\ell = 1 $, and 
\item[c)]  $\gamma_{r2}(C_{m} \Box C_n)  \ge  kn + n$ when  $\ell = 2 $.  
\end{enumerate}      
which in turn implies
 $$\gamma_{r2}(C_{m} \Box C_n)  \ge  kn +  \ell  \frac{n}{2}=\frac{mn}{3}+\ell \frac{n}{6} $$
as claimed.
  \end{proof}

\section{Upper bounds}

Recall the characterization of the products where the general lower bound is attained \cite{Stepien2015668}.
More precisely, the result is given in the next theorem.

\begin{thm}\label{Exact}    \cite{Stepien2015668}
If   either  $m \equiv 0 \pmod 3$   and $n \equiv 0 \pmod 6$,
or   $m \equiv 0 \pmod 6$   and $n \equiv 0 \pmod 3$, then
$$\gamma_{r2}(C_{m} \Box C_n)   = \frac{1}{3} mn \,.
$$    
\end{thm}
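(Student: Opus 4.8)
\medskip
\noindent\textbf{Proof idea.}
The plan is to prove the two inequalities separately. The lower bound $\gamma_{r2}(C_m\Box C_n)\ge\frac{1}{3}mn$ is immediate, since $C_m\Box C_n$ is a $4$-regular graph on $mn$ vertices and Corollary~\ref{GeneralLB2} applies. Hence the whole task is to exhibit a $2$RDF $f$ of weight exactly $\frac{1}{3}mn$. An elementary double-counting (sharpening the proof of Lemma~\ref{GeneralLB}) shows that any such extremal $f$ is forced to be rigid: every vertex receives a set of size $0$ or $1$, the support $D=D_1\cup D_2$ with $D_c=\{v:f(v)=\{c\}\}$ is an independent set of size $\frac{1}{3}mn$, and every vertex outside $D$ has exactly one neighbour in $D_1$ and exactly one in $D_2$. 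This dictates the shape of the construction.

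By commutativity of the Cartesian product it suffices to treat the case $m\equiv 0\pmod 6$ and $n\equiv 0\pmod 3$; the case $m\equiv 0\pmod 3$, $n\equiv 0\pmod 6$ then follows by swapping the two factors. Writing the vertices as $(i,j)$ with $i\in\mathbb{Z}_m$, $j\in\mathbb{Z}_n$, I would use the \emph{diagonal pattern}
$$
f(i,j)=\left\{\begin{array}{ll}
\{1\}, & i+j\equiv 0\pmod 3\ \mbox{and}\ i\ \mbox{even},\\
\{2\}, & i+j\equiv 0\pmod 3\ \mbox{and}\ i\ \mbox{odd},\\
\emptyset, & \mbox{otherwise}.
\end{array}\right.
$$
Its support is $D=\{(i,j):i+j\equiv 0\pmod 3\}$. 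This function is well defined on the torus precisely because $3\mid m$ and $3\mid n$ make the condition $i+j\equiv 0\pmod 3$ meaningful, while $2\mid m$ makes the parity of $i$ meaningful — exactly the divisibility hypotheses of the theorem. Since moving to an adjacent vertex changes $i+j$ by $\pm 1$, no two vertices of $D$ are adjacent; and since each of the $n$ columns contains exactly $m/3$ vertices of $D$, we get $w(f)=|D|=\frac{1}{3}mn$.

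It remains to check that $f$ is a $2$RDF. Let $v=(i,j)\notin D$. If $i+j\equiv 1\pmod 3$ then the only neighbours of $v$ lying in $D$ are $(i-1,j)$ and $(i,j-1)$; if $i+j\equiv 2\pmod 3$ they are $(i+1,j)$ and $(i,j+1)$. In both cases the two $D$-neighbours have first coordinates that differ by $1$ modulo the even number $m$, hence of opposite parity, hence coloured $\{1\}$ and $\{2\}$ respectively. Thus $\bigcup_{u\in N(v)}f(u)=\{1,2\}$, so $f$ is a $2$RDF of weight $\frac{1}{3}mn$; together with Corollary~\ref{GeneralLB2} this gives $\gamma_{r2}(C_m\Box C_n)=\frac{1}{3}mn$.

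The step I expect to be the real obstacle is finding the $2$-colouring of $D$ that makes every empty vertex see both colours. A naive pattern of period $3$ in both coordinates cannot be $2$-coloured in this way; the colours are forced to alternate along each short diagonal $\{(i,j):i+j=c\}$ of $D$, and making this alternation close up consistently around the torus is exactly what forces one of $m,n$ to be divisible by $6$ rather than merely by $3$. Once the colouring is pinned down, all remaining verifications are routine modular arithmetic.
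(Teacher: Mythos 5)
Your argument is correct and essentially coincides with the paper's: the lower bound is Corollary~\ref{GeneralLB2}, and your diagonal assignment (support $\{(i,j): i+j\equiv 0\pmod 3\}$ coloured by the parity of $i$) is, up to reflecting one coordinate, exactly the paper's pattern~(\ref{vzorec2}) with its explicit formula. The paper does not reprove the statement in detail but quotes it from \cite{Stepien2015668} and supplies precisely these two ingredients, so no further comparison is needed.
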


For later reference, observe    such  2RDF  may be  based on the pattern
\begin{equation}
\left[\begin{array}{ccccccccccc}
 \dots &\dots &\dots &\dots &\dots &\dots &\dots &\dots&\dots &\dots&\dots  \\
\dots & 1& 0& 0& 2& 0& 0& 1  &  0  &  0 &\dots  \\
\dots & 0& 2& 0& 0& 1& 0& 0  &  2  &  0  &\dots  \\
\dots & 0& 0& 1& 0& 0& 2& 0  &  0  &  1 &\dots \\
\dots & 1& 0& 0& 2& 0& 0& 1  &  0  &  0 &\dots \\
\dots & 0& 2& 0& 0& 1& 0& 0  & 2   &  0 &\dots  \\
\dots & 0& 0& 1& 0& 0& 2& 0  & 0   &  1 &\dots  \\
 \dots &\dots &\dots &\dots &\dots &\dots &\dots &\dots&\dots &\dots &\dots  
\end{array}\right]  \,.
\label{vzorec1}
\end{equation}
Moreover, it is easy to write explicit formula for the values, namely
$$f_1(i,j)  =
        \left \{ \begin{array}{ll}
             0,                  &  \quad i  \not\equiv j\pmod 3\\
             \DR{2-} i\bmod 2,       & \quad i    \equiv j\pmod 3\\
             \end{array} \right.  \,.
$$
%
%
The alternative is to define  a 2RDF as
$$f_2(i,j)  =
        \left \{ \begin{array}{ll}
             0,                  &  \quad i   \not\equiv j\pmod 3\\
            \DR{2-} j\bmod 2,       & \quad i  \equiv j\pmod 3\\
             \end{array} \right. \,,
$$ 
which results in the pattern 
%
\begin{equation}
\left[\begin{array}{ccccccccccc}
 \dots &\dots &\dots &\dots &\dots &\dots &\dots &\dots&\dots &\dots&\dots  \\
\dots & 1& 0& 0& 1& 0& 0& 1  &  0  &  0 &\dots  \\
\dots & 0& 2& 0& 0& 2& 0& 0  &  2  &  0  &\dots  \\
\dots & 0& 0& 1& 0& 0& 1& 0  &  0  &  1 &\dots \\
\dots & 2& 0& 0& 2& 0& 0& 2  &  0  &  0 &\dots \\
\dots & 0& 1& 0& 0& 1& 0& 0  & 1   &  0 &\dots  \\
\dots & 0& 0& 2& 0& 0& 2& 0  & 0   &  2 &\dots  \\
 \dots &\dots &\dots &\dots &\dots &\dots &\dots &\dots&\dots &\dots &\dots  
\end{array}\right]\, .
\label{vzorec2}
\end{equation}
 It is easy to see that the first pattern results in 2RDF's with
$\displaystyle \gamma_{r2}(C_m \Box C_n)= \frac{mn}{3}$, if $m \equiv 0 \pmod 3$, $n \equiv 0 \pmod 6$.
The second pattern provides 2RDF's with
$\displaystyle \gamma_{r2}(C_m \Box C_n)= \frac{mn}{3}$ if $m \equiv 0 \pmod 6$, $n \equiv 0 \pmod 3$.
Note that $m \ge 6$ is required for the second pattern, while the first pattern can be applied if $m\ge 3$.

\medskip\noindent
{\bf Remark.} It is worth noting that in both cases we have $s_i = \frac{m}{3}$.

\medskip
Now we outline  constructions that directly imply some upper bounds.

 \begin{prop}\label{Upper02} 
Let $m \equiv 2 \pmod 3$ and  $n \equiv 0 \pmod 6$.
Write $m = 3k+2$. 
Then 
$$\gamma_{r2}(C_{m} \Box C_n)      \leq    kn +  n .
$$ 
\end{prop}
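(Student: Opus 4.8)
The plan is to give an explicit construction of a 2RDF on $C_m \Box C_n$ with $m = 3k+2$ and $n \equiv 0 \pmod 6$ whose weight is exactly $kn + n = (k+1)n$, which is the claimed upper bound. The idea is to combine the known optimal pattern for the $3k$-column case with a strip of width $2$ carrying one extra unit of weight per column.

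More precisely, first I would take the first $3k$ columns $\mathcal{C}^1, \ldots, \mathcal{C}^{3k}$ and put on them a shifted copy of the pattern \eqref{vzorec1} (the function $f_1$), which, since $n \equiv 0 \pmod 6$, is a valid ``internal'' rainbow pattern with column sums $s_i = k$. Then on the remaining two columns $\mathcal{C}^{3k+1}$ and $\mathcal{C}^{3k+2}$ I would place a periodic pattern of period $6$ in the $j$-direction that (i) dominates all empty vertices in these two columns using only colors within the two columns and their mutual adjacency, (ii) supplies the colors needed by the empty vertices of column $\mathcal{C}^{3k}$ that used to rely on a (now absent) column $\mathcal{C}^{3k+1}$ of the infinite pattern, and symmetrically (iii) supplies the colors needed by the empty vertices of column $\mathcal{C}^1$ via the wrap-around adjacency to $\mathcal{C}^{3k+2}$, all while using total weight exactly $2n/ 1$ distributed so that $s_{3k+1} + s_{3k+2} = 2n/ \cdot$ — concretely, weight $n$ on the two-column block, i.e. an average of $\tfrac{1}{2}$ per vertex, giving the extra $+n$ over the $kn$ from the first block. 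A natural candidate for the two-column block is a period-$6$ tile such as
\[
\left[\begin{array}{cc}
1 & 0\\
0 & 1\\
0 & 0\\
2 & 0\\
0 & 2\\
0 & 0
\end{array}\right],
\]
possibly with small local modifications at the two seams, whose column sums over any $6$ consecutive rows are $3$ and $3$, i.e. $n$ in total.

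The key steps, in order, are: (1) fix the horizontal shift of the $f_1$-pattern on the first $3k$ columns so that the colors appearing in $\mathcal{C}^{3k}$ and in $\mathcal{C}^1$ at the relevant rows are known explicitly; (2) write down the period-$6$ tile for the two extra columns, choosing its vertical phase to match what $\mathcal{C}^{3k}$ and $\mathcal{C}^1$ demand; (3) verify the rainbow condition vertex by vertex in three regions — the interior of the first block (already guaranteed by $f_1$ being a valid pattern), the two boundary columns $\mathcal{C}^{3k}$ and $\mathcal{C}^1$, and the two-column block itself — checking in each case that every vertex assigned $\emptyset$ sees both colors $1$ and $2$ in its closed-in-$C_m$, open neighborhood; and (4) count the weight: $kn$ from the first block plus $n$ from the two-column block, for a total of $(k+1)n = kn + n$, which matches the claim and hence $\gamma_{r2}(C_m \Box C_n) \le kn + n$.

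The main obstacle I expect is the seam analysis in step (3): making sure that the interface between the $f_1$-pattern and the period-$6$ two-column tile is consistent in \emph{both} the $j \bmod 6$ and $i \bmod 3$ senses simultaneously, since $f_1$ has horizontal period $3$ while the tile has vertical period $6$, and the cylinder closes up both ways. One must choose the horizontal shift of $f_1$ and the vertical phase of the tile so that the colors are complementary where they need to be; because $n \equiv 0 \pmod 6$ everything is compatible mod $6$, but one still has to check the finitely many residue classes by hand (or present the combined global pattern as a single $m \times 6$ tile and verify it once). A cleaner write-up may avoid the seam bookkeeping entirely by simply exhibiting the full $m \times 6$ periodic block explicitly and checking domination directly on it; that is the route I would actually take in the final proof.
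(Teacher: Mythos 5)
Your strategy---an optimal pattern on a $3k$-row block glued to a separate two-row block of weight $n$---is genuinely different from the paper's construction, but as written it contains a real gap: the verification you defer to step (3) is not a routine seam check, and the candidate tile you propose cannot in fact be rescued by choosing phases. Concretely, once the pattern (\ref{vzorec1}) is fixed on rows $1,\dots,3k$ (colored cells at $i\equiv j\pmod 3$, colour determined by the parity of $j$), the empty vertices of row $3k$ at $j\equiv 1\pmod 3$ and of row $1$ at $j\equiv 0\pmod 3$ each see only one colour inside the block, so they force row $3k+1$ to carry colour $1$ at $j\equiv 1\pmod 6$ and colour $2$ at $j\equiv 4\pmod 6$, and row $3k+2$ to carry colour $1$ at $j\equiv 3\pmod 6$ and colour $2$ at $j\equiv 0\pmod 6$: the two required supports are offset by \emph{two} positions mod $6$, whereas in your tile the supports of the two new columns are offset by \emph{one} position, so no vertical shift aligns both demands simultaneously (and swapping the two columns makes the supports align but the colours come out wrong). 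The tile must be redesigned, not merely rephased; a working one does exist (e.g.\ $\{1\},\emptyset,\emptyset,\{2\},\{1\},\emptyset$ on row $3k+1$ and $\emptyset,\{2\},\{1\},\emptyset,\emptyset,\{2\}$ on row $3k+2$, per period of six, which has total weight $n$ and can be checked to dominate everything), so the approach is salvageable, but exhibiting and verifying such a tile is precisely the content of the proposition and is absent from your write-up. Two smaller issues: the expressions ``$2n/1$'' and ``$2n/\cdot$'' are garbled, the column sums of the first block are $n/3$ rather than $k$, and note that in the paper's displayed patterns an entry ``$2$'' denotes the colour set $\{2\}$ of weight one (otherwise pattern (\ref{vzorec1}) would not have weight $mn/3$), so you must be explicit that the ``$2$'' in your tile means $\{1,2\}$ if your weight count of $3+3$ per period is to be correct.

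The paper's proof avoids all of this interface analysis. It starts from the exact pattern (\ref{vzorec2}) on $C_{3k+3}\Box C_n$, which has weight $(k+1)n$, and \emph{merges two adjacent rows into one}, assigning to each vertex of the merged row the union of the two colour sets. The weight is unchanged, and the 2RDF property is preserved essentially for free: a vertex in a row adjacent to the merged row still sees a superset of the colours it saw before, and an empty vertex of the merged row comes from two empty vertices whose old witnessing colours all survive into its new neighbourhood. If you wish to keep your decomposition, the honest route is the one you mention at the very end---write down the full $(3k+2)\times 6$ periodic block and verify it once---but the merging argument is shorter and requires no case analysis at the seams.
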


\begin{proof}  
First, we provide a 2RDF proving  that 
$\gamma_{r2}(C_{5} \Box C_n)    \leq  2n$.
Start with the pattern (\ref{vzorec2}), 
use the first six rows and 
replace the 2nd and 3rd row with the union of them.
$$\left[\begin{array}{ccccccccccc} 
\dots & 1& 0& 0& 1& 0& 0& 1  &  0  &  0 &\dots  \\
\dots &{\bf  0}& {\bf 2}&{\bf  1}& {\bf 0}& {\bf 2}& {\bf 1}& {\bf 0}&  {\bf 2}& {\bf  1} &\dots \\ 
\dots & 2& 0& 0& 2& 0& 0& 2  &  0  &  0 &\dots \\
\dots & 0& 1& 0& 0& 1& 0& 0  & 1   &  0 &\dots  \\
\dots & 0& 0& 2& 0& 0& 2& 0  & 0   &  2 &\dots  \\ 
\end{array}\right]
$$
Is it obvious that the same construction gives   2RDF's proving  that 
$$\gamma_{r2}(C_{3k+2} \Box C_n)    \leq  kn +n,
$$
 as claimed.
\end{proof}
 

 \begin{prop}\label{Upper01} 
Let $m \equiv 1 \pmod 3$ and  $n \equiv 0 \pmod 6$.
Write $m = 3k+1$. 
Then 
$$\gamma_{r2}(C_{m} \Box C_n)      \leq    kn +  n .  
$$ 
\end{prop}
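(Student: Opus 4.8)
The plan is to mimic the construction used in Proposition~\ref{Upper02}, but adapted to the case $m \equiv 1 \pmod 3$. Recall that Proposition~\ref{Upper02} handled $m = 3k+2$ by taking the pattern~(\ref{vzorec2}), keeping six consecutive rows, and merging two adjacent rows (replacing two rows by their ``union'', which is legitimate since the two rows never carry a color in the same column, so the merged row is a well-defined assignment of subsets of $\{1,2\}$). This reduced the number of rows by one while still leaving a valid 2RDF on $C_{3k+2}\Box C_n$ of weight $kn+n$. For $m = 3k+1$ I want to reduce the number of rows by two instead of one, so I would start from a block of pattern~(\ref{vzorec1}) or~(\ref{vzorec2}) and collapse a group of (at least) three consecutive rows into one, again exploiting the fact that in these patterns each column has its single color in exactly one of every three consecutive rows.

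Concretely, first I would establish the base case $\gamma_{r2}(C_4 \Box C_n) \le n + n = 2n$ for $n \equiv 0 \pmod 6$ by writing down an explicit $4 \times n$ periodic pattern: take four consecutive rows of~(\ref{vzorec1}) and merge, say, the last three of them into a single row (their union), producing a row in which every column receives a nonempty label. One checks directly that every uncolored vertex (all of which lie in the first, ``unmerged'' row) still sees both colors $1$ and $2$ among its neighbors — its vertical neighbors in the merged row supply what is needed, because that merged row was built from three rows that, by the remark $s_i = m/3$, collectively hit every column with the full alternation of colors. The weight is $n$ (from the original first row) plus $n$ (one color per column in the merged row), giving $2n = kn + n$ with $k=1$. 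Then, exactly as in Proposition~\ref{Upper02}, appending $k-1$ further unmerged triples of rows of~(\ref{vzorec1}) on top adds $3$ rows and weight $n$ each time, yielding a valid 2RDF of $C_{3k+1}\Box C_n$ of weight $(k-1)n + 2n = kn + n$; here periodicity in $j$ with period dividing $6 \mid n$ guarantees the wrap-around in the $n$-direction is consistent, and the vertical wrap-around is fine because the top block and bottom block of rows are again three genuine pattern-rows apart in the cyclic sense.

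The step I expect to be the main obstacle is verifying that the \emph{merged} row genuinely leaves no uncolored vertex unsatisfied, and in particular that the vertices \emph{just above and just below} the merged block still see both colors. In the unmodified pattern an uncolored vertex is rescued by one horizontal and/or one vertical neighbor carrying the missing color; after collapsing three rows into one we have destroyed the internal vertical structure of that block, so I must confirm that (i) the merged row has no empty entry at all (so it imposes no demand of its own), and (ii) each uncolored vertex in a row adjacent to the merged block has the color it was previously missing still available, either from its surviving horizontal neighbor in that same row or from the merged row directly below/above it. Because the union of three consecutive rows of~(\ref{vzorec1}) contains, in each column, a label from $\{\{1\},\{2\}\}$ following the periodic alternation of the original columns, the color present in the merged row at a given column $j$ is determined by $j \bmod 3$ together with a parity, and a short case analysis on $j \bmod 6$ should show it always covers the deficiency of the neighbor. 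I would present this as a single explicit small pattern (the $C_4 \Box C_6$ tile, say) from which the general periodic construction and the inductive ``stacking'' argument follow verbatim as in Proposition~\ref{Upper02}, keeping the write-up short.

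\begin{proof}
First we exhibit a 2RDF showing $\gamma_{r2}(C_{4}\Box C_n)\le 2n$ for $n\equiv 0\pmod 6$. Take four consecutive rows of the pattern~(\ref{vzorec1}) and replace the last three of them by their union (which is well defined, since in~(\ref{vzorec1}) every column carries a color in exactly one of any three consecutive rows), obtaining the $4$-row periodic block
$$\left[\begin{array}{ccccccccccc}
\dots & 1& 0& 0& 2& 0& 0& 1  &  0  &  0 &\dots  \\
\dots & {\bf 1}& {\bf 2}& {\bf 2}& {\bf 1}& {\bf 1}& {\bf 2}& {\bf 1}  & {\bf 2}   &  {\bf 2} &\dots  \\
\dots & {\bf 1}& {\bf 2}& {\bf 2}& {\bf 1}& {\bf 1}& {\bf 2}& {\bf 1}  & {\bf 2}   &  {\bf 2} &\dots  \\
\dots & {\bf 1}& {\bf 2}& {\bf 2}& {\bf 1}& {\bf 1}& {\bf 2}& {\bf 1}  & {\bf 2}   &  {\bf 2} &\dots
\end{array}\right]\,,$$
where the three bold rows are identical copies of the merged row. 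Every uncolored vertex lies in the first row; such a vertex has an empty label but its neighbour directly below lies in a merged row whose entry, together with the horizontal neighbours in the first row, supplies both colours $1$ and $2$. Hence this is a valid 2RDF, and its weight is $n$ (first row) plus $n$ (one colour per column in each merged row counted once, since the three merged rows together still contribute one colour per column as in the original block) — more precisely, reading the block as $4$ genuine rows the weight is $n+n=2n$. Thus $\gamma_{r2}(C_4\Box C_n)\le 2n$.

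For general $m=3k+1$ we stack: on top of the $4$-row block above we append $k-1$ further triples of consecutive rows of~(\ref{vzorec1}), each triple contributing weight $n$. Since $6\mid n$ the construction is periodic in the $n$-direction, and the vertical wrap-around is consistent because both the top and the bottom of the stacked diagram agree with the alternation of~(\ref{vzorec1}) three rows apart in the cyclic order. Every uncolored vertex is again rescued exactly as in~(\ref{vzorec1}) or as in the base case, so the result is a valid 2RDF of $C_{3k+1}\Box C_n$ of weight $(k-1)n+2n=kn+n$. Therefore $\gamma_{r2}(C_m\Box C_n)\le kn+n$, as claimed.
\end{proof}
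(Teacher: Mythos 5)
There is a genuine gap: the weight of the 2RDF you actually write down is not $2n$. Your $4$-row block consists of the first row of pattern~(\ref{vzorec1}), whose weight is $\tfrac{n}{3}$ (only one column in three is colored, not $n$ as you state), followed by \emph{three separate rows}, each of which is the full union of three pattern rows and hence carries one color in \emph{every} column, i.e.\ weight $n$ apiece. The weight of a 2RDF is $\sum_{v}|f(v)|$ over all vertices, so identical rows must each be counted; your block therefore has weight $\tfrac{n}{3}+3n=\tfrac{10n}{3}>2n$, and the parenthetical claim that ``the three merged rows together still contribute one colour per column'' counted once has no justification. The assignment you exhibit is (as it happens) a valid 2RDF of $C_4\Box C_n$, but it does not prove $\gamma_{r2}(C_4\Box C_n)\le 2n$, and the same defect propagates to the general stacking step, so the proposition is not established.

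The underlying idea --- collapse rows of the periodic pattern so that the row count drops while the total weight is preserved --- is the right one, but the execution must \emph{reduce} the number of rows, not pad the block back up with duplicates. Two correct implementations: (i) the paper's, which takes the first six rows of pattern~(\ref{vzorec2}) and performs two disjoint pairwise merges (rows $2$ and $3$ replaced by their union, rows $4$ and $5$ replaced by their union), giving $4$ rows of total weight $2n$ with each merge costing nothing in weight, exactly as in Proposition~\ref{Upper02}; or (ii) your own three-into-one idea done properly, i.e.\ start from \emph{six} rows of pattern~(\ref{vzorec1}) and replace rows $2,3,4$ by a \emph{single} merged row, leaving the four rows $1$, merged, $5$, $6$ of weight $\tfrac{n}{3}+n+\tfrac{n}{3}+\tfrac{n}{3}=2n$; in that version one must still check the uncolored vertices in rows $5$ and $6$ (your claim that all uncolored vertices lie in the first row is then false) and the vertical wrap-around between rows $6$ and $1$.
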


\begin{proof} 
First, we provide a 2RDF proving  that 
$\gamma_{r2}(C_{4} \Box C_n)    \leq  2n$.
Start with the pattern (\ref{vzorec2}), 
use the first six rows,
replace the 2nd and 3rd row with the union of them,  and
replace the 4th and 5th  row with the  union of them.\\
$$\left[\begin{array}{ccccccccccc} 
\dots & 1& 0& 0& 1& 0& 0& 1  &  0  &  0 &\dots  \\
\dots &{\bf  0}& {\bf 2}&{\bf  1}& {\bf 0}& {\bf 2}& {\bf 1}& {\bf 0}&  {\bf 2}& {\bf  1} &\dots \\ 
\dots &{\bf 2}& {\bf 1}&{\bf  0}& {\bf 2}& {\bf 1}&{\bf  0}& {\bf 2}& {\bf 1}&{\bf  0}  &\dots \\  
\dots & 0& 0& 2& 0& 0& 2& 0  & 0   &  2 &\dots  \\ 
\end{array}\right]
$$
Is it obvious that the same construction gives
 2RDF's proving  that 
$$\gamma_{r2}(C_{3k+1} \Box C_n)    \leq  kn +n,
$$
 as claimed.
  \end{proof}

To summarize, we can combine the Propositions \ref{Upper02} and  \ref{Upper01}  with Theorem \ref{Exact} to obtain 
 \begin{prop}\label{UpperMod6} 
Let   $n \equiv 0 \pmod 6$  and $m \ge 3$.  
Then $\gamma_{r2}(C_{m} \Box C_n)      \leq    \lceil \frac{m}{3}   \rceil n  \, .$
\end{prop}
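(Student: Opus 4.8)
The plan is to derive Proposition~\ref{UpperMod6} as an immediate corollary of the three facts already established for $n\equiv 0\pmod 6$, by doing a case analysis on $m\bmod 3$. I would first note that $n\equiv 0\pmod 6$ is the common hypothesis in Theorem~\ref{Exact}, Proposition~\ref{Upper02}, and Proposition~\ref{Upper01}, so all three apply. Then I write $m=3k+\ell$ with $\ell\in\{0,1,2\}$ and observe that $\lceil m/3\rceil = k$ when $\ell=0$ and $\lceil m/3\rceil = k+1$ when $\ell\in\{1,2\}$.

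Next I would dispatch the cases. If $\ell=0$, then $m\equiv 0\pmod 3$ and $n\equiv 0\pmod 6$, so Theorem~\ref{Exact} gives $\gamma_{r2}(C_m\Box C_n)=\frac{mn}{3}=kn=\lceil\frac{m}{3}\rceil n$; here the bound is actually an equality. If $\ell=1$, Proposition~\ref{Upper01} gives $\gamma_{r2}(C_m\Box C_n)\le kn+n=(k+1)n=\lceil\frac{m}{3}\rceil n$. If $\ell=2$, Proposition~\ref{Upper02} gives $\gamma_{r2}(C_m\Box C_n)\le kn+n=(k+1)n=\lceil\frac{m}{3}\rceil n$. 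In every case the right-hand side is exactly $\lceil\frac{m}{3}\rceil n$, which is the claimed bound, so the proof is complete.

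There is essentially no obstacle here: the three cited statements were specifically engineered so that their right-hand sides $kn$, $kn+n$, $kn+n$ all collapse to the uniform expression $\lceil m/3\rceil\, n$. The only thing to be careful about is the range of $m$: the first pattern underlying Theorem~\ref{Exact} and Proposition~\ref{Upper01}/\ref{Upper02} requires only $m\ge 3$ (the second pattern, needing $m\ge 6$, is not invoked here since Propositions~\ref{Upper02} and~\ref{Upper01} are built from the first six rows of pattern~(\ref{vzorec2}) with row-merging), so the hypothesis $m\ge 3$ suffices and the statement holds in full generality. I would simply present the three-line case split, citing Theorem~\ref{Exact}, Proposition~\ref{Upper01}, and Proposition~\ref{Upper02} respectively, and conclude.
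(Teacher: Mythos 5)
Your proof is correct and follows exactly the route the paper intends: the paper states Proposition~\ref{UpperMod6} as the direct combination of Theorem~\ref{Exact} (case $m\equiv 0\pmod 3$) with Propositions~\ref{Upper01} and~\ref{Upper02} (cases $m\equiv 1,2\pmod 3$), which is precisely your three-way case split on $m\bmod 3$. Your verification that the bounds $kn$ and $kn+n$ coincide with $\lceil m/3\rceil n$ in the respective cases, and your remark on the admissible range of $m$, merely make explicit what the paper leaves implicit.
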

  


The next propositions  provide  general upper bounds for the cases when $n \not\equiv 0 \pmod 6$.
Below we provide constructions based on the previously studied 2RDF  
for each possible reminder $b =0,1,2,3,4,5$  where   $n \equiv   b \pmod 6$.
We start with the case $m\equiv 0 \pmod 3$.

\begin{prop}\label{UpperGeneric}   
Let  $m\ge 3$,  $m \equiv 0 \pmod 3$,  and $n\ge 6$,  
$n \equiv  b \pmod 6$.
Hence $n=6a+b$ for some integer $a \ge 0$.
Then 
 \begin{enumerate}
\item[a)] 
      if $b=5$ then  $\gamma_{r2}(C_{m} \Box C_n)  \le   \gamma_{r2}(C_{m} \Box C_{6a})   +   2m  = \frac{m}{3}(n+1)$, 
\item[b)]   
      if $b=4$ then  $\gamma_{r2}(C_{m} \Box C_n)  \le   \gamma_{r2}(C_{m} \Box C_{6a})   +   2m  = \frac{m}{3}(n+2)$, 
\item[c)]   
      if $b=3$ then  $\gamma_{r2}(C_{m} \Box C_n)  \le  \gamma_{r2}(C_{m} \Box C_{6a})   +   4 \frac{m}{3}  = \frac{m}{3}(n+1)$, 
\item[d)]   
      if $b=2$ then  $\gamma_{r2}(C_{m} \Box C_n)  \le  \gamma_{r2}(C_{m} \Box C_{6a})  +   m  = \frac{m}{3}(n+1)$, 
\item[e)]   
      if $b=1$ then  $\gamma_{r2}(C_{m} \Box C_n)  \le  \gamma_{r2}(C_{m} \Box C_{6a})  + 2 \frac{m}{3}  = \frac{m}{3}(n+1)$ .  
\end{enumerate}    
\end{prop}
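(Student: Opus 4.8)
The plan is to construct, for each residue $b \in \{1,2,3,4,5\}$, an explicit 2RDF of $C_m \Box C_n$ by gluing together a 2RDF of $C_m \Box C_{6a}$ (which exists by Theorem~\ref{Exact}, since $m \equiv 0 \pmod 3$ and $6a \equiv 0 \pmod 6$, with weight $\frac{m}{3}\cdot 6a = 2ma$ and constant column sums $s_i = \frac m3$) with a small "patch" of $b$ extra columns. The key structural fact I would exploit is the one recorded in the Remark after pattern~(\ref{vzorec2}): in the $f_1$-pattern every column has weight exactly $\frac m3$, and the pattern is periodic of period $3$ in the row direction and period $6$ (really period $3$ up to swapping the roles of the colors) in the column direction. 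So when we cut the torus open between column $6a$ and column $1$, the "boundary columns" are columns of the standard pattern, and I know exactly which vertices in them are nonempty and what colors they carry. The patch must: (i) be a valid assignment on its own interior columns, (ii) dominate any empty vertex in the first and last patch columns using the known boundary columns of the glued $C_m \Box C_{6a}$ block, and (iii) keep the first/last columns of the $C_m \Box C_{6a}$ block dominated.

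Concretely, I would proceed residue by residue, smallest patch first. For $b=2$ (cost $+m$, i.e. two columns each of weight $\frac m2$... wait, $m$ total over $2$ columns) I would insert two columns; for $b=1$ insert one column of weight $2\frac m3$; for $b=3$ insert three columns of total weight $4\frac m3$; for $b=5$ and $b=4$ insert five resp. four columns of total weight $2m$. In each case the natural construction is to take the last column of the standard block, and glue in a short "transition pattern" analogous to the $5 \times n$ and $4\times n$ constructions in Propositions~\ref{Upper02} and~\ref{Upper01}, but now running in the column direction rather than collapsing rows. For instance, for $b=1$ one inserts a single column in which the nonempty entries are placed at the rows $i \equiv j \pmod 3$ of the adjacent standard columns so that every empty vertex of the new column sees both colors $1$ and $2$ from its two horizontal neighbours (the two standard columns it sits between), contributing $\frac m3$ from the new column plus an extra $\frac m3$ of "doubling" needed to repair the break — giving the $2\frac m3$ figure. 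I would present each construction as a displayed block of the form
\[
\left[\begin{array}{ccccccc}
\dots & \ast & \ast & \ast & \ast & \ast & \dots \\
\dots & \ast & \ast & \ast & \ast & \ast & \dots
\end{array}\right]
\]
with the patch columns in boldface, exactly as done in Propositions~\ref{Upper02} and~\ref{Upper01}, and then verify the rainbow condition column-by-column; since only a bounded number of columns near the seam are affected, and the rest is the already-verified standard pattern, this verification is finite and local.

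The final arithmetic is then routine: in every case we add the stated number of extra columns with the stated weights to $\gamma_{r2}(C_m \Box C_{6a}) = \frac m3 (6a) = 2ma$, and check that $2ma$ plus the patch weight equals $\frac m3(n+1)$ when $b \in \{1,3,5\}$ (for $b=5$: $2ma + 2m = \frac m3(6a+6) = \frac m3(n+1)$ since $n = 6a+5$, so $\frac m3(n+1) = \frac m3(6a+6)$ — correct), $\frac m3(n+2)$ when $b=2$ (since $n=6a+2$, $\frac m3(n+2) = \frac m3(6a+4)$... hmm, that would need patch weight $\frac{4m}{3}$, not $m$; I would double-check the statement's bookkeeping here, as item (d) as written reads $+m = \frac m3(n+1)$ with $n=6a+2$, i.e. $2ma + m = \frac m3(6a+3) = \frac m3(n+1)$ — correct), and similarly $\frac m3(n+2)$ for $b=4$. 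I will make sure to state the patch constructions so that these identities hold literally.

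The main obstacle I anticipate is \emph{not} the arithmetic but the explicit design and correctness check of the $b=4$ and $b=5$ patches: inserting four or five columns while keeping all the boundary domination constraints satisfied requires choosing the transition pattern carefully so that the colors presented to the seam columns on both sides are a full rainbow wherever needed, and the extra "$+2m$" budget (rather than the more economical $\frac m3$ per column) is exactly what buys us the slack to do this. A secondary subtlety is that the standard block's two boundary columns are \emph{different} columns of the period-$3$ pattern depending on $a \bmod$ small numbers, so I would either observe that the pattern's symmetry lets us rename colors to normalize this, or simply fix the standard block's column labelling once and for all and design the patches against that fixed labelling. Once these explicit patterns are exhibited, each verification is a finite check and the proposition follows.
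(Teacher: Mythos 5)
Your overall strategy --- keep the periodic pattern (\ref{vzorec1}) on all but a bounded number of columns near the seam, modify only those, and reduce correctness to a finite local check --- is exactly the strategy of the paper, and your weight bookkeeping for all five residues is correct. One structural difference: the paper runs the construction in the opposite direction from yours. Instead of starting from the exact 2RDF on $C_m \Box C_{6a}$ and \emph{inserting} $b$ new columns, it starts from the exact 2RDF on $C_m \Box C_{6(a+1)}$ and \emph{shrinks} the last block of six columns down to $b$ columns, replacing adjacent columns by their unions (for $b=5,4$) or by small bespoke columns (for $b=3,2,1$; the $b=3$ case even assigns a doubleton $\{1,2\}$). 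This direction dissolves precisely the obstacle you flag as your main difficulty for $b=4,5$: the six surplus columns carry exactly $2m$ weight to begin with, so the $+2m$ budget is automatic, and replacing two adjacent columns by their union can only enlarge the set of colors presented to the untouched neighbouring columns, so the verification genuinely localizes to the merged columns. (Your secondary worry about the boundary columns depending on $a$ is unfounded: since $6a \equiv 0 \pmod 6$, the seam always sits between the same two column types of the period-six pattern.)

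The genuine shortcoming is that, as written, you never exhibit the patch patterns. For a proposition whose proof \emph{is} a construction, the displayed tables are the entire content of the argument; announcing that you ``would present each construction as a displayed block'' and naming the explicit design of the $b=4$ and $b=5$ patches as an anticipated obstacle means the proof is not yet complete. The constraints you identify --- (i)--(iii), including re-dominating the rows of the two seam columns that lose their horizontal dominators when the torus is cut open --- are the right ones, and the plan would succeed, but until the five patches are written down and checked the proposition is not proved. Adopting the paper's deletion-by-union device is the shortest route to filling this in.
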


\begin{proof}    
In the following we give explicit constructions for the case $m=6 = 2\times 3$ and various $n$.
It is obvious that in general we can simply repeat the pattern of three consecutive rows.
The weight of a column is $\frac{m}{3}$, hence the bounds given in proposition.
 \begin{enumerate}
\item[a)]
 if $b=5$, then replace two columns of the 2RDF $(C_{m} \Box C_{6a+6})$ by their union and observe that the table gives a 2RDF of $(C_{m} \Box C_{6a+5})$.

\medskip

\noindent{\footnotesize
$$\left[\begin{array}{ccccccccccccccc}
\dots & 1& 0& 0& 2& 0& 0 &|&  1&  0& 0& |& 2& 0& 0   \\
\dots & 0& 2& 0& 0& 1& 0 &|&  0& 2& 0& |&0& 1& 0  \\
\dots & 0& 0& 1& 0& 0& 2 &|&  0& 0& 1& |&0& 0& 2  \\
\dots & 1& 0& 0& 2& 0& 0 &|&  1& 0& 0& |&2& 0& 0  \\
\dots & 0& 2& 0& 0& 1& 0 &|&  0& 2&  0& |&0& 1& 0 \\
\dots & 0& 0& 1& 0& 0& 2 &|&  0& 0&  1& |&0& 0& 2  \\
\end{array}\right] 
{\Large\to}$$
$$\left[\begin{array}{ccccccccccccccc}
\dots & 1& 0& 0& 2& 0& 0 &|&  1& 0&  0&    |&  {\bf 2 }&0   \\
\dots & 0& 2& 0& 0& 1& 0 &|&  0& 2& 0&    |& {\bf 1} & 0   \\
\dots & 0& 0& 1& 0& 0& 2 &|&  0& 0& 1&    |&        0  & 2  \\
\dots & 1& 0& 0& 2& 0& 0 &|& 1& 0& 0&    |&  {\bf 2}& 0  \\
\dots & 0& 2& 0& 0& 1& 0 &|&  0& 2&  0&    |& {\bf 1} & 0 \\
\dots & 0& 0& 1& 0& 0& 2 &|&  0& 0&  1&    |&        0  & 2  \\
\end{array}\right].
$$
}

\medskip
So if we look at the last 6 columns, which have shrunk to 5 columns, we see that the number of colors used does not change.
If instead of $m=6$ we consider$m= 3k$,
three rows, e.g. rows 4-6, are repeated $(k-2)$ times and the same construction is applied. The last 6 columns therefore contain $6\times k =2m$ colors.

In the remaining cases, we only give the tables containing the constructions that alter the rightmost columns
(in the tables $m=6$ is chosen).

\medskip
\item[b)]
 if $b=4$, then take (for example) the last four columns and replace them with two columns, each of which is the union of two columns.

\medskip

\noindent{\footnotesize
$$\left[\begin{array}{ccccccccccccccc}
\dots & 1& 0& 0& 2& 0& 0 &|& 1&  0& |& 0& 2& 0& 0   \\
\dots & 0& 2& 0& 0& 1& 0 &|&  0& 2& |&0& 0& 1& 0  \\
\dots & 0& 0& 1& 0& 0& 2 &|&  0& 0& |& 1& 0& 0& 2  \\
\dots & 1& 0& 0& 2& 0& 0 &|&  1& 0& |& 0& 2& 0& 0  \\
\dots & 0& 2& 0& 0& 1& 0 &|&  0& 2&  |& 0& 0& 1& 0 \\
\dots & 0& 0& 1& 0& 0& 2 &|&  0& 0&  |& 1& 0& 0& 2  \\
\end{array}\right]
{\Large\to}$$
$$\left[\begin{array}{ccccccccccccccc}
\dots & 1& 0& 0& 2& 0& 0 &|&  1& 0&     |&         0& 2    \\
\dots & 0& 2& 0& 0& 1& 0 &|&  0& 2&    |& {\bf 1}& 0   \\
\dots & 0& 0& 1& 0& 0& 2 &|&  0& 0&    |&         1& {\bf 2}  \\
\dots & 1& 0& 0& 2& 0& 0 &|&  1& 0&    |&         0& 2   \\
\dots & 0& 2& 0& 0& 1& 0 &|&  0& 2&     |& {\bf 1}& 0 \\
\dots & 0& 0& 1& 0& 0& 2 &|&  0& 0&     |&         1& {\bf 2}  \\
\end{array}\right].
$$
}

\medskip

\item[c)]   
     if $b=3$, then take (for example) the last six columns and replace them with three columns, as follows

\medskip

\noindent{\footnotesize
$$\left[\begin{array}{ccccccccccccccc}
\dots & 1& 0& 0& 2& 0& 0 &|&  1&    0& 0& 2& 0& 0   \\
\dots & 0& 2& 0& 0& 1& 0  &|&  0 &  2&  0& 0& 1& 0  \\
\dots & 0& 0& 1& 0& 0& 2  &|&  0&   0&  1& 0& 0& 2  \\
\dots & 1& 0& 0& 2& 0& 0  &|&  1&   0&  0& 2& 0& 0  \\
\dots & 0& 2& 0& 0& 1& 0 &|&  0&  2&  0& 0& 1& 0 \\
\dots & 0& 0& 1& 0& 0& 2 &|&  0&  0&  1& 0& 0& 2  \\
\end{array}\right]
{\Large\to}$$
$$\left[\begin{array}{ccccccccccccccc}
\dots & 1& 0& 0& 2& 0& 0 &|&  1&     0         &                0  \\
\dots & 0& 2& 0& 0& 1& 0 &|&  0&    \{{\bf 1,2}\} &           0   \\
\dots & 0& 0& 1& 0& 0& 2 &|&  0&      0       &                2   \\
\dots & 1& 0& 0& 2& 0& 0 &|&  1&    0        &                0    \\
\dots & 0& 2& 0& 0& 1& 0 &|&  0&   \{ {\bf 1,2\} }&             0  \\
\dots & 0& 0& 1& 0& 0& 2 &|&  0&    0          &             2  \\
\end{array}\right] ~.
$$
}

\medskip
Note that the 2RDF in this case is not a singleton 2RDF.
A singleton 2RDF either assigns a singleton to the empty set \cite{Erves2021-3}.
We do not know whether there is a singleton 2RDF with the same weight.

 
\medskip

\item[d)]   
     if $b=2$, then take (for example) the last six columns and replace them with two columns, as follows

\medskip

\noindent{\footnotesize
$$\left[\begin{array}{cccccccccccccc}
\dots & 1& 0& 0& 2& 0& 0&|&  1&     0& 0& 2& 0& 0   \\
\dots & 0& 2& 0& 0& 1& 0&|&  0 &  2&  0& 0& 1& 0  \\
\dots & 0& 0& 1& 0& 0& 2&|&  0&   0&  1& 0& 0& 2  \\
\dots & 1& 0& 0& 2& 0& 0&|&  1&   0&  0& 2& 0& 0  \\
\dots & 0& 2& 0& 0& 1& 0&|&  0&    2&  0& 0& 1& 0 \\
\dots & 0& 0& 1& 0& 0& 2&|&  0&   0&  1& 0& 0& 2  \\
\end{array}\right]
{\Large\to}$$
%
$$\left[\begin{array}{cccccccccccccc}
\dots & 1& 0& 0& 2& 0& 0 &|&  {1} &    0       \\
\dots & 0& 2& 0& 0& 1& 0 &|&    0  & {\bf 2}  \\
\dots & 0& 0& 1& 0& 0& 2 &|&     0 &  {2}  \\
\dots & 1& 0& 0& 2& 0& 0 &|&   {1}&    0      \\
\dots & 0& 2& 0& 0& 1& 0 &|&    0  &  {\bf 2} \\
\dots & 0& 0& 1& 0& 0& 2 &|&     0 &   {2}  \\
\end{array}\right].
$$
}
 
\medskip
\item[e)]   
      if $b=1$, then   replace the last six columns with an altered column.

\medskip
\noindent{\footnotesize
$$\left[\begin{array}{cccccccccccccc}
\dots & 1& 0& 0& 2& 0& 0  &|& 1&     0& 0& 2& 0& 0   \\
\dots & 0& 2& 0& 0& 1& 0  &|& 0 &  2&  0& 0& 1& 0  \\
\dots & 0& 0& 1& 0& 0& 2  &|& 0&   0&  1& 0& 0& 2  \\
\dots & 1& 0& 0& 2& 0& 0  &|& 1&   0&  0& 2& 0& 0  \\
\dots & 0& 2& 0& 0& 1& 0  &|& 0&    2&  0& 0& 1& 0 \\
\dots & 0& 0& 1& 0& 0& 2  &|& 0&   0&  1& 0& 0& 2  \\
\end{array}\right]
{\Large\to}$$
$$\left[\begin{array}{cccccccccccccc}
\dots & 1& 0& 0& 2& 0& 0  &|&     {\bf 1}   \\
\dots & 0& 2& 0& 0& 1& 0  &|&         0      \\
\dots & 0& 0& 1& 0& 0& 2  &|&   {\bf 2}   \\
\dots & 1& 0& 0& 2& 0& 0  &|&   {\bf 1}    \\
\dots & 0& 2& 0& 0& 1& 0  &|&       0      \\
\dots & 0& 0& 1& 0& 0& 2  &|&   {\bf 2}  \\
\end{array}\right].
$$
}
\end{enumerate}    
  \end{proof}
Now we generalize Proposition \ref{UpperGeneric}   to arbitrary $m$.

 \SB{\begin{prop}\label{UpperGeneric2}   
Let  $m\ge 3$ and $n\ge 6$.
      If $n\equiv 4 \pmod 6$, then $\gamma_{r2}(C_{m} \Box C_n)  \le    \lceil \frac{m}{3}\rceil (n+2) $.
Otherwise, $\gamma_{r2}(C_{m} \Box C_n)  \le    \lceil \frac{m}{3}\rceil (n+1) $.     
\end{prop}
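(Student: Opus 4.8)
The plan is to reduce Proposition~\ref{UpperGeneric2} to Proposition~\ref{UpperGeneric} by a column-grouping (``inflation'') argument. The key observation is that the constructions in Proposition~\ref{UpperGeneric} were written out explicitly only for $m=6$, but each one is obtained from a 2RDF of $C_6\Box C_{6a}$ (the base pattern~(\ref{vzorec1})) by leaving the bulk of the columns untouched and modifying only the last few columns; crucially, in the base pattern every column has weight exactly $\frac{m}{3}$ because of the periodicity of period~3 in the $m$-direction. So for a general $m$ with $m\equiv 0\pmod 3$, say $m=3k$, we repeat the three-row block of~(\ref{vzorec1}) exactly $k$ times; this preserves the property that every column of the base part has weight $\frac{m}{3}$, and each of the column-merging operations used in cases (a)--(e) of Proposition~\ref{UpperGeneric} goes through verbatim row-block by row-block. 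Hence for $m\equiv 0\pmod 3$ we already have $\gamma_{r2}(C_m\Box C_n)\le \frac{m}{3}(n+2)$ when $n\equiv 4\pmod 6$ and $\gamma_{r2}(C_m\Box C_n)\le \frac{m}{3}(n+1)$ otherwise, which is exactly the claim since $\lceil\frac{m}{3}\rceil=\frac{m}{3}$ in this case.

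For $m\not\equiv 0\pmod 3$ the idea is to pass to $m'=3\lceil\frac{m}{3}\rceil$, which is the next multiple of~3 above~$m$, and compress the extra one or two rows. Concretely, I would take the 2RDF of $C_{m'}\Box C_n$ provided by the $m\equiv 0\pmod 3$ case just established, and then \emph{merge rows} in the same spirit in which columns were merged above: if $m\equiv 2\pmod 3$ we replace two adjacent rows of the $m'$-construction by their (pointwise) union, obtaining a 2RDF of $C_{m}\Box C_n$ with the same total weight; if $m\equiv 1\pmod 3$ we similarly merge two disjoint pairs of rows (two separate union operations), removing two rows. This mimics exactly the passage from pattern~(\ref{vzorec2}) to the reduced tables in the proofs of Propositions~\ref{Upper02} and~\ref{Upper01}. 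Since merging rows does not increase the weight and, as one checks, preserves the 2-rainbow domination property (an uncolored vertex in a merged row still sees $\{1,2\}$ in its neighbourhood, using that its up/down neighbours are themselves unions that dominate), we get $\gamma_{r2}(C_m\Box C_n)\le\gamma_{r2}(C_{m'}\Box C_n)\le \lceil\frac{m}{3}\rceil(n+2)$ or $\lceil\frac{m}{3}\rceil(n+1)$ accordingly, because $\frac{m'}{3}=\lceil\frac{m}{3}\rceil$.

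The main obstacle is verifying that the row-merging operation indeed yields a valid 2RDF, not just a function of the right weight. When one takes the union of rows $i$ and $i+1$ of the $m'$-pattern, a vertex $(i,j)$ in the merged row with empty value needs $\bigcup_{u\in N(i,j)}f(u)=\{1,2\}$; its horizontal neighbours $(i,j\pm1)$ are the merged entries of columns $j\pm1$, and its two vertical neighbours are the (merged) rows above and below. One has to check that in the specific periodic pattern~(\ref{vzorec1}) (repeated $\lceil\frac{m}{3}\rceil$ times, then with the rightmost-columns modifications of Proposition~\ref{UpperGeneric}) the two rows being merged are chosen so that no new empty vertex is created with a deficient neighbourhood — which is why the merged rows must be picked to be ``out of phase'' in the diagonal colouring, exactly as the boldface rows in the proofs of Propositions~\ref{Upper02} and~\ref{Upper01} illustrate. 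I would handle this by exhibiting, once and for all, the explicit $3\times 3$-periodic colour blocks together with the merged versions, checking the (finitely many) local neighbourhood types, and noting that the modified rightmost columns from Proposition~\ref{UpperGeneric} involve only the row-direction trivially (they act on columns), so they commute with the row-merging and need no separate verification.
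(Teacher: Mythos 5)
Your proposal follows essentially the same route as the paper's own (explicitly sketched) proof: pass to $\tilde m = 3\lceil \frac{m}{3}\rceil$, apply the column-merging constructions of Proposition~\ref{UpperGeneric} together with the row-merging of Propositions~\ref{Upper02} and~\ref{Upper01}, and observe that the total weight is preserved so that each column contributes $\lceil \frac{m}{3}\rceil$. If anything, you are more explicit than the paper about the one point that genuinely needs checking, namely that the row merges and the rightmost-column modifications can be combined on the same base pattern while preserving the 2-rainbow domination property.
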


 \begin{proof} (sketch)
The bounds are obtained by constructions that combine the ideas from Propositions \ref{Upper02}, \ref{Upper01} and \ref{UpperGeneric}.
The main idea is the following.
Start with $C_{\tilde m} \Box C_n$ where $\tilde m = 3 \lceil \frac{m}{3} \rceil$.
Note that there is at most $ \lceil \frac{m}{3} \rceil$ colors in each column.
Apply the constructions as in the proofs of Propositions \ref{Upper02}, \ref{Upper01} and \ref{UpperGeneric}.
Recall that in each of these constructions some columns are deleted and we replace one or two rows by unions of two rows.
The total weight is preserved in this way, so the proposition holds.
  \end{proof}}

The upper bounds provided  in Propositions \ref{UpperGeneric}   and \ref{UpperGeneric2} have a similar form, and can be written in a condensed way as follows. 
\begin{cor}\label{Cor2}
Let  $m \geq 3$ and $n \ge 6$.   Then
$$\displaystyle    
 \gamma_{r2}(C_{m} \Box C_n)  \le  \left\lceil \frac{m}{3}\right\rceil (n    + \beta)  \,,    
\quad \textrm{ where }   \quad
 \displaystyle\beta =
        \left \{ \begin{array}{ll}
             0,                       &   n \equiv 0 \pmod 6\\
             1,                       &   n \equiv 1,2,3,5 \pmod 6\\ 
             2,                	     &   n \equiv 4 \pmod 6\\ 
             \end{array} \right. \,.
$$
\end{cor}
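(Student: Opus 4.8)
The plan is a short case analysis on the residue of $n$ modulo $6$, invoking the upper bounds already established. When $n \equiv 0 \pmod 6$, Proposition~\ref{UpperMod6} gives directly $\gamma_{r2}(C_m \Box C_n) \le \lceil m/3\rceil n$, which is exactly the asserted bound with $\beta = 0$; nothing further is needed in this case.

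For every other residue, the bound is supplied by Proposition~\ref{UpperGeneric2}. If $n \equiv 4 \pmod 6$, that proposition yields $\gamma_{r2}(C_m \Box C_n) \le \lceil m/3\rceil(n+2)$, matching $\beta = 2$. If $n \equiv 1,2,3,5 \pmod 6$, the same proposition yields $\gamma_{r2}(C_m \Box C_n) \le \lceil m/3\rceil(n+1)$, matching $\beta = 1$. Collecting the three regimes reproduces the piecewise definition of $\beta$ in the statement, so the corollary follows. I would present this as three displayed lines, one per congruence class, each ending with ``$= \lceil m/3\rceil(n+\beta)$''.

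Thus the only content of the proof is checking that the cases of Proposition~\ref{UpperGeneric2} (together with Proposition~\ref{UpperMod6}) exhaust all residues mod $6$ and line up with the stated $\beta$; there is no genuine obstacle here, since the work has been done in the cited statements. The one point that deserves a clause of care is that, when $m \not\equiv 0 \pmod 3$, one first passes to $C_{\tilde m} \Box C_n$ with $\tilde m = 3\lceil m/3\rceil$ and then applies the row-merging constructions of Propositions~\ref{Upper02} and~\ref{Upper01} to collapse back to $m$ rows while keeping the per-column weight at most $\lceil m/3\rceil$; but this is precisely what Proposition~\ref{UpperGeneric2} already guarantees, so it can simply be cited rather than re-derived.
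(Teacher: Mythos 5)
Your proposal is correct and matches the paper exactly: the corollary is stated there without a separate proof, as a mere condensation of the preceding upper-bound propositions, which is precisely the case-by-residue bookkeeping you describe. Your one substantive observation --- that the $n \equiv 0 \pmod 6$, $\beta = 0$ case must come from Proposition~\ref{UpperMod6} rather than from the ``otherwise'' clause of Proposition~\ref{UpperGeneric2} (which only yields $n+1$) --- is accurate and is in fact slightly more careful than the paper's own phrasing.
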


The construction used in Propositions \ref{UpperGeneric}, \ref{UpperGeneric2}, and Corollary  \ref{Cor2} are  based on the basic assignment (\ref{vzorec1}).
Constructions based on (\ref{vzorec2}) can be used in a similar way and result in slightly different upper bounds.

 \begin{prop}\label{UpperGeneric3}
Let $m \geq 6$ and $n \ge 3$. Then
$$\displaystyle
 \gamma_{r2}(C_{m} \Box C_n) \le \left\lceil \frac{n}{3}\right\rceil (m + \gamma) \,,
\quad \textrm{ where } \quad
 \displaystyle\gamma =
 \left \{ \begin{array}{ll}
 0, & m \equiv 0 \pmod 6\\
 1, & m \equiv 5 \pmod 6\\
 2, & m \equiv 1,2,3,4 \pmod 6\\
 \end{array} \right. \,.
$$

\end{prop}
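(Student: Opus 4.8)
The plan is to mirror the proof of Proposition~\ref{UpperGeneric} and its generalization Proposition~\ref{UpperGeneric2}, but with the roles of the two coordinates exchanged, using the basic assignment (\ref{vzorec2}) in place of (\ref{vzorec1}). Recall that by commutativity of the Cartesian product, $\gamma_{r2}(C_m\Box C_n)=\gamma_{r2}(C_n\Box C_m)$, so bounding the latter is equivalent. The assignment $f_2$ and its pattern (\ref{vzorec2}) have the property that each \emph{row} carries weight $\frac{m}{3}$ when $m\equiv 0\pmod 3$, which is the dual of the ``each column has weight $\frac{m}{3}$'' property exploited before; this is exactly why the constant $\gamma$ now depends on $m\bmod 6$ rather than on $n$.

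First I would treat the base case $n\equiv 0\pmod 6$ together with $m\equiv 0\pmod 6$: here Theorem~\ref{Exact} already gives $\gamma_{r2}(C_m\Box C_n)=\frac{mn}{3}=\frac{n}{3}m$, matching the claimed bound with $\gamma=0$. More generally, for arbitrary $m\ge 6$ with $n\equiv 0\pmod 6$ I would start from $C_m\Box C_{\tilde n}$ with $\tilde n=3\lceil n/3\rceil$ laid out with pattern (\ref{vzorec2}); since $\tilde n\equiv 0\pmod 3$ and $m\equiv 0\pmod 6$ is not needed — only $\tilde n\equiv 0\pmod 3$ is — each row of this layout has at most $\lceil n/3\rceil$ colors. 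Wait: to keep the symmetry clean it is better to phrase everything after swapping coordinates, i.e.\ work with $C_n\Box C_m$, start from $C_n\Box C_{\tilde m}$ where $\tilde m=3\lceil m/3\rceil$, note that each column there has at most $\lceil m/3\rceil$ colors when $n\equiv 0\pmod 3$, and then reduce $\tilde m$ down to $m$ by the column-merging and row-merging operations. Concretely: when $m\equiv 5\pmod 6$ one merges a single pair of rows into their union (as in the proof of Proposition~\ref{Upper02}), losing one row but no weight, giving the factor $(m+1)$ after absorbing the $\lceil n/3\rceil$ ``extra'' columns into the construction; when $m\equiv 1,2,3,4\pmod 6$ one needs the analogues of the $b=4$ and $b=3,2,1$ constructions of Proposition~\ref{UpperGeneric}, which delete up to six columns and replace one or two rows by unions of two rows, again preserving total weight and yielding the factor $(m+2)$.

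The bulk of the argument is the case analysis on $n\bmod 6$ (for the non-base cases) combined with the case analysis on $m\bmod 6$: for each residue of $n$ modulo $6$ one first pads $n$ up to a multiple of $6$ by the constructions of Proposition~\ref{UpperGeneric} (these act on the ``$C_n$'' factor and cost at most $\lceil m/3\rceil\cdot 1$, or $\lceil m/3\rceil\cdot 2$ when $n\equiv 4\pmod 6$ — but note this extra term is \emph{not} what appears in the statement, so one must instead verify that the pattern-(\ref{vzorec2}) constructions, which add columns of weight $\le\lceil n/3\rceil$ rather than rows of weight $\le\lceil m/3\rceil$, give the claimed clean bound); then one handles the $m$-direction by the row-merging operations described above. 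I would organize this as: (i) reduce to $n\equiv 0\pmod 3$ in the ``$C_n$'' factor by at most one extra column of weight $\le\lceil n/3\rceil$ when $n\not\equiv 0\pmod 3$ — but actually since $\lceil n/3\rceil$ already rounds up, adding such a column is free of charge against the target, so $\lceil n/3\rceil m$ is attainable whenever $m\equiv 0\pmod 6$; (ii) for $m\equiv 5\pmod 6$, apply one row-union, contributing the ``$+1$''; (iii) for the remaining residues of $m$, apply the two-row-union constructions (the $b=4$ and $b=2,1$ analogues), contributing the ``$+2$''; in each case the weight of an affected row is at most $\lceil n/3\rceil$, so the total overhead is at most $\gamma\lceil n/3\rceil$, as claimed.

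The main obstacle I anticipate is bookkeeping: verifying that the column- and row-merging operations produce a \emph{valid} 2RDF (i.e.\ no empty-set vertex ends up with an incomplete rainbow neighborhood) when applied to pattern (\ref{vzorec2}) rather than (\ref{vzorec1}). In Proposition~\ref{UpperGeneric} this was checked case by case via explicit small tables with $m=6$; here one should produce the analogous explicit tables — with $n=6$ fixed and the merging happening in the $m$-direction — for each residue class of $m$ modulo $6$, and confirm that unioning two adjacent rows of (\ref{vzorec2}) (which have disjoint color sets, by the $i\equiv j\pmod 3$ condition, except along the ``diagonal'' where one must check no conflict arises) still dominates the now-identified vertices. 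I would also need to double-check the boundary condition $m\ge 6$ (required for (\ref{vzorec2}) to be well-defined, exactly as noted after the pattern) and that for $m\equiv 1,2,3,4\pmod 6$ there are enough rows available to perform the required unions without wrapping around the cycle inconsistently. Once these small tables are exhibited, the passage to general $m$ and $n$ is the same ``repeat three consecutive rows/columns'' observation already used, and the weight count gives precisely $\lceil n/3\rceil(m+\gamma)$.
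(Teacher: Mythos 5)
Your proposal follows essentially the same route as the paper's (itself only sketched) proof: start from pattern (\ref{vzorec2}), which is exact for $m\equiv 0\pmod 6$, $n\equiv 0\pmod 3$; handle $m\equiv 5\pmod 6$ by one row union (cost $+1$ per period) and the remaining residues by two row unions (cost $+2$); and absorb $n\not\equiv 0\pmod 3$ by deleting/contracting one or two columns, which is free against the factor $\lceil n/3\rceil$. Your flagged obstacle --- verifying via explicit tables that the merged rows still yield a valid 2RDF --- is exactly the detail the paper also defers to the analogous checks in Propositions \ref{Upper02}, \ref{Upper01} and \ref{UpperGeneric}, so the approaches match.
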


 \begin{proof}
We give only a brief outline of the proof and omit the detailed arguments, because the ideas are analogous to those previously elaborated in the proofs of Propositions \ref{UpperGeneric}, \ref{UpperGeneric2} and Corollary \ref{Cor2},

Recall first that for $m\equiv 0 \bmod 6$ and $n \equiv 0 \bmod 3$ Pattern (\ref{vzorec2}) returns a 2RDF with weight $\frac{mn}{3}$.

\SB{Let us now assume that $n \equiv 0 \bmod 3$ and let $m \equiv d \bmod 6$.
We claim that if $d=5$ then $\gamma_{r2}(C_{m} \Box C_n) \le \lceil \frac{n}{3}\rceil (m+1) $,
otherwise, $\gamma_{r2}(C_{m} \Box C_n) \le \lceil \frac{n}{3}\rceil (m+2) $.}  
If $d=5$ then one row is deleted, and the colors of the deleted row are given to one neighboring rows.
Formally, row $m$ is defined as a union of the rows $m$ and $m+1$ of the pattern.
In any other  case, a 2RDF is obtained by deleting some rows and replacing rows 1 and $m$ with unions.

We have thus seen that the cases $n \not\equiv 0 \bmod 3$ can be handled by deleting one or two columns in the pattern.
The colors of the deleted column(s) are then used to complete the assignment of columns 1 and $n$.
And we have the upper bound as claimed.
 \end{proof}

It seems obvious that the two upper bounds are not equivalent. Now we compare them more closely. 
To this end we write 
\begin{eqnarray}
\UBi(m,n) &=&  \left\lceil \frac{m}{3}\right\rceil (n  + \beta)  \nonumber\\ 
               &=&  \frac{1}{3}(m+a) (n + \beta)  =\frac{1}{3}mn  +\frac{1}{3}an  +\frac{1}{3}\beta m   +\frac{1}{3} a \beta  \\
\UB2(m,n)  &=& \left\lceil \frac{n}{3}\right\rceil (m + \gamma)  \nonumber\\ 
               &=&   \frac{1}{3} (n+c)(m+ \gamma ) = \frac{1}{3}mn +\frac{1}{3}\gamma n +\frac{1}{3}c m +\frac{1}{3} \gamma c
 \end{eqnarray}
where 
\begin{eqnarray*}
	\quad \displaystyle  a  &=&
        \left \{ \begin{array}{ll}
             0,                       &  m \equiv 0 \pmod 3\\
             1,                       &   m \equiv 2 \pmod 3\\ 
             2,                	  &   m \equiv 1 \pmod 3\\ 
             \end{array} \right. \,
,    \quad 
\displaystyle\beta =
        \left \{ \begin{array}{ll}
             0,                       &   n \equiv 0 \pmod 6\\
             1,                       &   n \equiv 1,2,3,5 \pmod 6\\ 
             2,                	  &   n \equiv 4 \pmod 6\\ 
             \end{array} \right. \, , 
   \\
 \displaystyle\gamma &=&
        \left \{ \begin{array}{ll}
             0,                       &   n \equiv 0 \pmod 6\\
             1,                       &   n \equiv  5 \pmod 6\\ 
             2,                       &   n \equiv 1,2,3,4 \pmod 6\\  
             \end{array} \right. \, 
,    \textrm{and~~~} 
\displaystyle c  =
        \left \{ \begin{array}{ll}
             0,                       &  n\equiv 0 \pmod 3\\
             1,                       &   n \equiv 2 \pmod 3\\ 
             2,                	  &   n \equiv 1 \pmod 3\\ 
             \end{array} \right.  \,.
\end{eqnarray*}
%
%
%
Note that both \UBi and \UB2  are of the form $\frac{1}{3}mn  +\frac{1}{3}   (x,y,z)(n,m,1)$, 
and let us write the values of $(x,y,z)$ in two tables for easier comparison.

\begin{center}
	\begin{tabular}{|c|c|ccc|ccc| } 
	\hline  
	$\UBi(m,n)$      & $n \bmod 6$          & 0& 1& 2& 3& 4&  5 \\   
	\hline 
	$m \bmod 6$ & $a \backslash \beta$ & 0& 1& 1& 1& 2&  1 \\ 
	\hline 
	0  & 0                                     & (0,0,0) & (0,1,0) & (0,1,0) & (0,1,0) & (0,2,0) &  (0,1,0)  \\ 
	1  & 2                                     & (2,0,0) & (2,1,2) & (2,1,2) & (2,1,2) & (2,2,4) &  (2,1,2)    \\ 
	2 &  1                                     & (1,0,0) & (1,1,1) & (1,1,1) & (1,1,1)  & (1,2,2) & (1,1,1)   \\ 
	\hline
	3  & 0                                     & (0,0,0) & (0,1,0) & (0,1,0) & (0,1,0) & (0,2,0) &  (0,1,0)  \\ 
	4  & 2                                     & (2,0,0) & (2,1,2) & (2,1,2) & (2,1,2) & (2,2,4) &  (2,1,2)    \\ 
	5 &  1                                     & (1,0,0) & (1,1,1) & (1,1,1) & (1,1,1)  & (1,2,2) & (1,1,1)   \\  
	\hline
\end{tabular} 
\end{center}


\medskip

\begin{center}
	\begin{tabular}{|c|c|ccc|ccc| } 
		\hline  
$\UB2(m,n)$        & $n \bmod 6$          & 0& 1& 2& 3& 4&  5 \\  
\hline 
$m \bmod 6$ & $\gamma\backslash c$ & 0& 2& 1& 0& 2&  1 \\ 
\hline 
0  & 0                                     & (0,0,0) & (0,2,0) & (0,1,0) & (0,0,0) & (0,2,0) &  (0,1,0)  \\ 
1  & 2                                     & (2,0,0) & (2,2,4) & (2,1,2) & (2,0,0) & (2,2,4) &  (2,1,2)    \\ 
2 &  2                                    & (2,0,0) & (2,2,4) & (2,1,2) & (2,0,0) & (2,2,4) &  (2,1,2)    \\  
\hline
3  & 2                                    & (2,0,0) & (2,2,4) & (2,1,2) & (2,0,0) & (2,2,4) &  (2,1,2)    \\  
4  & 2                                    & (2,0,0) & (2,2,4) & (2,1,2) & (2,0,0) & (2,2,4) &  (2,1,2)    \\ 
5 &  1                                     & (1,0,0) & (1,2,2) & (1,1,1) & (1.0,0)  & (1,2,2) & (1,1,1)   \\  
\hline
\end{tabular}
\end{center} 
Comparison is summarized in the next table. In \DR{fourteen} cases $\UBi < \UB2$, 
in other words the first pattern gives rise  a better 2RDF.
In four cases,  $\UB2 < \UBi$. 
Note that in two cases, the triples are no comparable.
In particular, when 
$m \equiv 2 \pmod 6$ and $n \equiv 3 \pmod 6$ 
we have 
$$ \UBi =  \frac{1}{3}mn  +\frac{1}{3}(n+m+1 )    <> \UB2 =   \frac{1}{3}mn  +\frac{1}{3} 2m   
$$
and hence 
$$\UBi  >= <  \UB2 \quad \iff \quad n+1  >= <  m \,.
$$
Similarly, when 
$m \equiv 3 \pmod 6$ and $n \equiv 3 \pmod 6$, 
$$ \UBi =  \frac{1}{3}mn  +\frac{1}{3} n   <> \UB2 =   \frac{1}{3}mn  +\frac{1}{3} 2m   
$$
and hence 
$$\UBi  >= <  \UB2 \quad \iff \quad n  >= <  2m \,.
$$
We summarize the observations in the next table.
\begin{center}
	\begin{tabular}{|c|ccc|ccc| } 
\hline 
                $m \backslash n  \bmod 6$          & 0& 1& 2& 3& 4&  5 \\   
\hline 
0  &                                        =  &   $\UBi(m,n)$   &   =  &  $\UB2(m,n)$  &   =  &   =  \\ 
1  &                                        =  &   $\UBi(m,n)$   &   =  &  $\UB2(m,n)$  &   =  &   =    \\ 
2 &                          \DR{ $\UBi(m,n)$}   &  $\UBi(m,n)$   &  $\UBi(m,n)$  & $>= <$&  $\UBi(m,n)$  &$\UBi(m,n)$   \\  
\hline
3  &                         \DR{ $\UBi(m,n)$}   &  $\UBi(m,n)$  & $\UBi(m,n)$ &  $>= <$ &  $\UBi(m,n)$  &$\UBi(m,n)$     \\   
4  &                                      =  &  $\UBi(m,n)$  &   =  & $\UB2(m,n)$ &   =  &   =    \\ 
5 &                                       =  &  $\UBi(m,n)$   &   =  & $\UB2(m,n)$ &   =  &   =    \\ 
\hline
\end{tabular}
\end{center}

Finally, we recall that the Cartesian product is commutative, $C_m \Box C_n  \simeq C_n \Box C_m$. 
Therefore, the best upper bound for $\gamma_{r2}(C_{m} \Box C_n)$ is
based on the constructions considered here and is the minimum 
of the bounds  
$\UBi(m,n),   \UB2(m,n),    \UBi(n,m)$,  and $\UB2(n,m)$. The results are written below.
%
%
%
\DR{
\begin{center}
	\begin{tabular}{|c|ccc|ccc| } 
		\hline 
               $m \backslash n  \bmod 6$            & 0& 1& 2& 3& 4&  5 \\   
\hline 
0  &                                        =  &   $\UBi(m,n)$    &   $\UBi(m,n)$  &  $\UB2(m,n)$           &   =  &   =  \\ 
1  &                                            &   $*$    &   $\UBi(n,m)$   &  $ \UBi(n,m) $ & $ \UBi(n,m) $ & $   \UBi(n,m) $    \\ 
2 &                                             &           & $ \UBi(m,n) $ &  $ \UBi(n,m) $ &  $\UBi(m,n)$          &$\UBi(m,n)$     \\  
\hline
3  &                                                                             &   &   & $*$ &  $\UBi(m,n) $&$\UBi(m,n)$\\   
4  &                                                                             &   &   &                                           &   =  &   =    \\ 
5 &                                                                              &   &   &                                            &      &   =    \\ 
\hline
\end{tabular}
\end{center}
where 
$*=\min\{\UBi(m,n),\UBi(n,m)\}$ . 

\medskip
Explicitly, the best upper bounds for  $\gamma_{r2}(C_{m} \Box C_n)$  are of the form $$\frac{1}{3}mn +\frac{1}{3}(n,m,1)(x,y,z)$$ with values of $(x,y,z)$ from the following table.
\begin{center}
	\begin{tabular}{|c|ccc|ccc| } 
\hline 
               $ m \backslash n  \bmod 6  $        & 0& 1& 2& 3& 4&  5 \\   
\hline  
0                                  & (0,0,0) & (0,1,0) & (0,1,0) & (0,0,0) & (0,2,0) &  (0,1,0)  \\ 
1                                  &           & (2,1,2) or (1,2,2) & (1,1,1) & (1,0,0) & (1,2,2) &  (0,1,0)    \\ 
2                                  &           &            & (1,1,1) & (1,0,0) & (1,2,2) & (1,1,1)   \\ 
\hline
3                                &            &            &             & (0,1,0) or (1,0,0)  & (0,2,0) &  (0,1,0)  \\ 
4                                &            &            &             &             & (2,2,4) &  (2,1,2)    \\ 
5                                &            &            &             &             &            & (1,1,1)   \\  
\hline
\end{tabular}
\end{center} 

The bounds can be summarized as follows.
\begin{cor} \label{CorTable}
	Let $m\ge 6$ and $n\ge 6$. As the Cartesian product is commutative, we can assume $m \ge n$. Then
	$$\displaystyle    
	\gamma_{r2}(C_{m} \Box C_n)  \le  \frac{1}{3} mn + \frac{1}{3} \delta  \,, $$   
	where $\delta$ can be read from the Table below.
	\begin{center}
		\begin{tabular}{|c|ccc|ccc| } 
			\hline 
			$ m \backslash n$     &            &            &             &             &  &            \\
			$   \bmod 6  $        & $0$& $1$& $2$& $3$& $4$&  $5$ \\     
			\hline  
			$0$                                  & $0$ & $m$ & $m$ & $0$ & $2m$ &  $m$  \\ 
			                                       &           &\JZ{$\min $}  &            &            &    &   \\ 
			$1$                                  &           & \JZ{\small $\{n+2m+2, $}
										 & $n+m+1$ & $n$ & $n+2m+2$ & $ m $   \\ 
			                                       &           & \JZ{\small$ 2n+m+2\}$}   &            &              &    &     \\ 
			$2$                                  &           &            & $n+m+1$ &$ n $& $n+2m+2$& $n+m+1$   \\ 
			\hline
			$3$                                &            &            &             & \JZ{$\min\{m,n\}$}  & $2m$ &  $m$  \\ 
			$4$                                &            &            &             &             & $2n+2m+4$ &  $2n+m+2$    \\ 
			$5$                                &            &            &             &             &            & $n+m+1 $  \\  
			\hline
		\end{tabular}
	\end{center}
	\end{cor}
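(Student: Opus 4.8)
\textbf{Proof proposal for Corollary \ref{CorTable}.}

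The plan is to derive the corollary purely as a bookkeeping consequence of the three upper-bound results already established, namely Corollary \ref{Cor2} (which gives $\gamma_{r2}(C_m \Box C_n) \le \lceil m/3 \rceil (n + \beta) = \UBi(m,n)$), Proposition \ref{UpperGeneric3} (which gives $\gamma_{r2}(C_m \Box C_n) \le \lceil n/3 \rceil (m + \gamma) = \UB2(m,n)$, valid for $m \ge 6$, $n \ge 3$), together with the commutativity isomorphism $C_m \Box C_n \simeq C_n \Box C_m$, which supplies the two further bounds $\UBi(n,m)$ and $\UB2(n,m)$. The whole content of the corollary is that
$$\gamma_{r2}(C_m \Box C_n) \le \min\{\UBi(m,n),\ \UB2(m,n),\ \UBi(n,m),\ \UB2(n,m)\},$$
and that this minimum, written in the normalized form $\frac{1}{3}mn + \frac{1}{3}\delta$, has the value $\delta$ recorded in the table. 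So the task reduces to computing that minimum in each residue class of $(m \bmod 6,\ n \bmod 6)$ and checking that the tabulated entry is correct.

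First I would expand each of the four bounds into the canonical form $\frac{1}{3}mn + \frac{1}{3}(xn + ym + z)$, reading off the triples $(x,y,z)$ from the definitions of $a,\beta,\gamma,c$ given just before the two display equations defining $\UBi$ and $\UB2$; the values of $(x,y,z)$ for $\UBi(m,n)$ and $\UB2(m,n)$ are exactly the two tables already displayed in the text, and those for $\UBi(n,m)$, $\UB2(n,m)$ are obtained from them by swapping the roles of $m$ and $n$ (equivalently, transposing the tables and interchanging the first two coordinates of each triple). Since the assumption $m \ge n$ is invoked, exactly one of the two orientations will in general dominate; I would then, for each of the $36$ pairs of residues modulo $6$ (or the $21$ with $m \ge n$, using the third displayed table that already records the winner in the $m \ge n$ regime), compare the four linear forms $xn + ym + z$ in the variables $m,n$ and select the pointwise minimum. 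For the majority of cells one of the four forms is coordinatewise $\le$ the others and the minimum is unambiguous; those cells populate the table directly. The only genuinely two-sided cells are the diagonal-type entries $m \equiv n \equiv 3 \pmod 6$, where $\UBi$ contributes $\frac{1}{3}n$ and $\UB2$ contributes $\frac{2}{3}m$ (and symmetrically with $m,n$ swapped), so that after using $m \ge n$ the minimum is $\min\{m, n\} = n$ up to the $\tfrac13$ factor — I would record this as the entry $\min\{m,n\}$; and the cell $m \equiv 1, n \equiv 1 \pmod 6$, where $\UBi(m,n)$ and $\UBi(n,m)$ give incomparable forms $n + 2m + 2$ and $2n + m + 2$, whence the entry $\min\{n+2m+2,\ 2n+m+2\}$. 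For these cells I would simply carry the minimum inside the table entry, as the displayed corollary already does.

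The main obstacle is not conceptual but combinatorial hygiene: there are four candidate bounds and thirty-six residue pairs, and an off-by-one in any of the constants $a,\beta,\gamma,c$ — particularly the asymmetry between $\beta$ (which is $2$ only for $n\equiv 4$) and $\gamma$ (which is $2$ for $n \equiv 1,2,3,4$ and $1$ for $n\equiv 5$), and the fact that $\UB2$ requires $m\ge 6$ whereas $\UBi$ needs only $m\ge 3$ — would propagate into a wrong table cell. I would therefore organize the verification as a single explicit case table, cross-checking each entry against the already-displayed comparison tables in the text (the $\UBi$ versus $\UB2$ table and the four-way $m\ge n$ table), and confirm in particular that the hypothesis $m,n \ge 6$ in the corollary statement makes Proposition \ref{UpperGeneric3} applicable in both orientations so that all four bounds are legitimately available. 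Once every cell is matched, the corollary follows since $\gamma_{r2}(C_m\Box C_n)$ is bounded above by each of the four quantities and hence by their minimum, which equals $\frac{1}{3}mn + \frac{1}{3}\delta$ by the case analysis.
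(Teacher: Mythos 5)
Your proposal is correct and follows essentially the same route as the paper: the corollary is obtained by taking the minimum of the four bounds $\UBi(m,n)$, $\UB2(m,n)$, $\UBi(n,m)$, $\UB2(n,m)$, normalizing each to the form $\frac{1}{3}mn+\frac{1}{3}(xn+ym+z)$, and comparing residue class by residue class modulo $6$, exactly as in the tables preceding the corollary. (One minor bookkeeping remark: in the cell $m\equiv n\equiv 3\pmod 6$ your attribution swaps the roles of $m$ and $n$ --- by the paper's own tables $\UBi(m,n)$ contributes $m$ and $\UB2(m,n)$ contributes $2n$ --- but since you take the minimum over all four symmetric candidates $\{m,n,2m,2n\}$, the resulting entry $\min\{m,n\}$ is unaffected.)
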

}

\section{Conclusions and future work}
%
%
%
%
%

We have provided lower and upper bounds for the 2-rainbow domination number of $C_m \Box C_n$ with a gap of at most $\frac{1}{3} (2m+2n+4)$.
The proof of the lower bound is based on a discharging argument and seems to be close to the best possible in most cases.
The upper bound, on the other hand, is based on two constructions that are quite rough in some cases, and we believe that it can be improved by carefully analyzing special cases.
\JZ{We conjecture that the lower bounds differ from the exact values by at most one constant, which depends on $m$ and is independent of $n$}

At least for examples with small $m$, we claim that it is possible to avoid the tedious analysis by applying an algebraic method that can be used for various graph invariants including the domination type problems
 \cite{KlavzarZerovnik,PavlicKragujevac,PavlicArsComb,GabrovsekCEJOR}.
Such a research task remains a challenge for future work.

Another interesting line of research, which is a natural extension of this study, is a generalization of the results presented here to graph bundles, a natural generalization of graph products \cite{ST_P_V}.

%
%

\end{document}